\newcommand{\pplusac}[1]{\mathcal{P}_{+,ac}\left(#1\right)}
\newcommand{\ptwoac}[1]{\mathcal{P}_{2,+,ac}\left(#1\right)}
\newcommand{\p}[1]{\mathcal{P}\left(#1\right)}
\newcommand{\lorenz}[1]{\mathfrak{L}\left(#1\right)}
\newcommand{\lorenzsub}[1]{\mathfrak{L}_{#1}}
\newcommand{\real}{\mathbb{R}}
\newcommand{\posreal}{\mathbb{R}^+}
\newcommand{\ddy}[1]{\frac{\partial #1}{\partial y}}
\newcommand{\ddx}[1]{\frac{\partial #1}{\partial x}}
\newcommand{\ddt}[1]{\frac{\partial #1}{\partial t}}
\newcommand{\ddf}[1]{\frac{\partial #1}{\partial f}}
\newcommand{\ddg}[1]{\frac{\partial #1}{\partial g}}
\newcommand{\twoddx}[1]{\frac{\partial^2 #1}{\partial x^2}}
\newcommand{\twoddy}[1]{\frac{\partial^2 #1}{\partial y^2}}
\newcommand{\twoddf}[1]{\frac{\partial^2 #1}{\partial f^2}}
\newcommand{\twoddg}[1]{\frac{\partial^2 #1}{\partial g^2}}
\newcommand{\dd}[2]{\frac{\partial #1}{\partial #2}}
\newcommand{\totdd}[2]{\frac{d #1}{d #2}}
\newcommand{\tottwodd}[2]{\frac{d^2 #1}{d {#2}^2}}
\newcommand{\ddNorm}[1]{\frac{\partial #1}{\partial \nu}}
\newcommand{\frechet}[2]{\frac{\delta #1}{\delta #2}}
\newcommand{\divg}{\text{div}}
\newcommand{\grad}[2]{\text{grad}_{#1}#2}
\newcommand{\cdspace}{{\mathcal{C}_D}}
\newcommand{\cdspacearg}[1]{{\mathcal{C}_{#1}}}
\newcommand{\LL}{\mathcal{L}}
\newcommand{\KK}{\mathbb{K}}
\newcommand{\LLfrak}{\mathfrak{L}}
\newcommand{\Lbf}{\mathbf{L}}
\newcommand{\Pbf}{\mathbf{P}}
\newcommand{\LorTan}{\mathbb{T}}
\newcommand*{\defeq}{\mathrel{\vcenter{\baselineskip0.5ex \lineskiplimit0pt
                     \hbox{\scriptsize.}\hbox{\scriptsize.}}}%
                     =}
\theoremstyle{plain}
\newtheorem{theorem}{Theorem}[section]
\newtheorem{lemma}[theorem]{Lemma}
\newtheorem{proposition}[theorem]{Proposition}
\theoremstyle{definition}
\newtheorem{definition}[theorem]{Definition}
\newtheorem{example}[theorem]{Example}
\newtheorem*{principle*}{Principle}
\newtheorem*{question*}{Question}
\theoremstyle{remark}
\newtheorem{remark}[theorem]{Remark}
\numberwithin{equation}{section}
\title[Gradient flows of Lorenz curves]{Variational principles on the space of Lorenz curves: \\ {\small Gradient structures and isometries inspired by Wasserstein geometry}}
\author{David W. Cohen}
\address{Department of Mathematics, Tufts University, Medford, MA}
\email{David.Cohen@Tufts.edu}
\thanks{The author received support from the NDSEG fellowship.}
\subjclass[2020]{58E30, 35A15, 37L05, 35A22, 91B80, 82C31}
\keywords{McKean-Vlasov equations, infinite-dimensional gradient flows, variational principles, Lorenz curves}
\date{July 2025}
\begin{document}

\begin{abstract}

We motivate and derive novel Riemannian gradient structures on the space of Lorenz curves, which preserve infinite-dimensional variational principles inherited from Fokker-Planck equations via the lens of Wasserstein geometry and its variants. We also prove isometry results between corresponding formal manifolds of probability measures and Lorenz curves, which suggest meaningful metrics on the space of Lorenz curves when an underlying kinetic premise is present. In so doing, elegant variational principles are imbued upon highly nonlinear and nonlocal integro-differential evolution equations resulting from a recently derived variable transformation of McKean-Vlasov Fokker-Planck equations.

\end{abstract}

\maketitle

\section{Introduction}

For a positive probability density $\rho$ over $\posreal$, which represents the distribution of a population among possible wealths, the \textit{Lorenz curve}, a frequently studied object in the analysis of economic inequality, is the parametric curve $\left(\int_0^x\,dy\,\rho(y), \int_0^x\,dy\, \rho(y)y\right)$. Such representations were first described by the economist Max Lorenz in the early twentieth century.\footnote{We refer readers to \cite{melot2024,MR3012052} as useful references on Lorenz curves.}

By the monotonicity of the cumulative distribution function (cdf) for a positive probability density, the Lorenz curve $\LL(f)$ can be considered a function of the cdf, $f\in [0,1]$, as a new independent variable. \Cref{fig:timeVaryingDistAndLor} shows a sequence of positive distributions over $\posreal$ and their associated Lorenz curves.

Thus for an equation of motion for a wealth distribution, there is an intrinsic time-dependent evolution equation of the associated Lorenz curves. Such a transform was derived for the first time in \cite{LorDyn2024}.

Consider the McKean-Vlasov Fokker-Planck equation \begin{equation}\label{eq:MVFPE}
    \ddt{\rho(x,t)} = -\ddx{}\left[\Sigma[x,t,\rho(x,t)] \rho(x,t)\right] + \twoddx{}\left[D[x,t,\rho(x,t)] \rho(x,t)\right],
\end{equation} where $\Sigma: \real \times \posreal \times \pplusac{\real} \rightarrow \real$ is the drift coefficient and $D: \real \times \posreal \times \pplusac{\real} \rightarrow \posreal$ is the diffusion coefficient. It was shown that the transformed dynamics of the Lorenz curve of $\rho(x,t)$, denoted $\LL: [0,1] \times \posreal \rightarrow \real$, obey \begin{equation}\label{eq:LorDynGen}
     \ddt{\LL(f,t)} = - \frac{\widetilde{D}[f,t,\LL]}{\twoddf{\LL(f,t)}} + \int_0^f\,dg\,\widetilde{\Sigma}[g,t,\LL],
\end{equation} where \[\widetilde{\Sigma}[f,t,\LL] \defeq \Sigma\left[\ddf{\LL(f,t)},t,\left(\twoddf{\LL(f,t)}\right)^{-1}\right] \]
 and \[\widetilde{D}[f,t,\LL] \defeq D\left[\ddf{\LL(f,t)},t,\left(\twoddf{\LL(f,t)}\right)^{-1}\right].\]

\begin{figure}[ht]
\centering
\includegraphics[width=.55\textwidth]{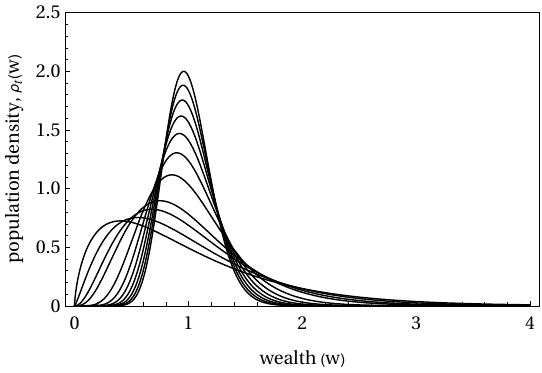}\\
\includegraphics[width=.55\textwidth]{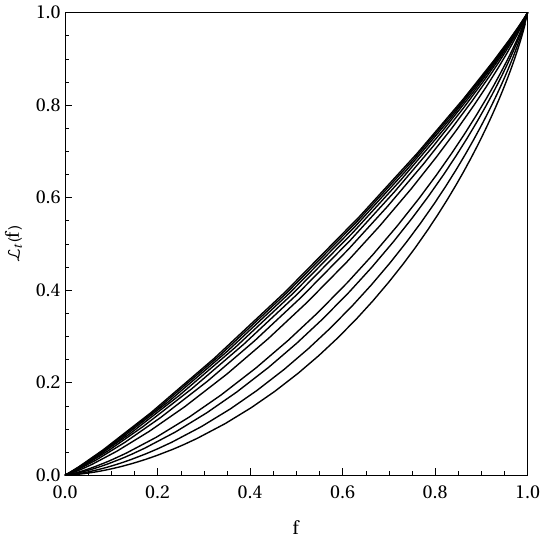}
\caption{A time-varying positive distribution with unit first moment over the positive reals (top) and the associated evolution of its Lorenz curves (bottom).}
\label{fig:timeVaryingDistAndLor}
\end{figure}

The equations governing the evolution in the space of Lorenz curves are nonlinear, nonlocal, and integro-differential even when the associated dynamics in the space of probability measures are linear, local, and only differential. It then appears folly to transform an evolutionary system into a facially worse set of variables. Below we set out the motivation in spite of this appearance.

\subsection{Motivation for pursuing the transformation}
The application of techniques from kinetic theory and mean-field theory to idealized, stochastic economic models has yielded exciting results over the past three decades -- for example, see \cite{MR4544046,MR2551376,MR3872473} and the citations within. It has become known that a broad class of econophysics models exhibit dynamics that asymptotically limit to the state of total oligarchy \cite{MR4267576,CDSpaces2024}. That is, a state in which almost all of the probability mass is concentrated ever closer to zero wealth and a vanishingly small amount of probability mass (colloquially, the oligarch) is sent off to unimaginably large wealth.

These solutions look roughly like \begin{equation*}\label{eq:runawayWealthSol}
    \rho_\epsilon(w)= \frac{1}{1+\epsilon}\delta(w-\epsilon) + \frac{\epsilon}{1+\epsilon} \delta\left(w-\frac{1}{\epsilon}\right)
\end{equation*} as $\epsilon  \rightarrow 0$.

A primary difficulty of this critical phenomenon is that the oligarch object \begin{equation}\label{eq:oligarchDist}``\Xi(w) = \lim _{\epsilon \rightarrow 0}\frac{\epsilon}{1+\epsilon}\delta \left(w-\frac{1}{\epsilon}\right)"\end{equation} does not fit cleanly within the classical theory of weak solution of PDE via Sobolev-Schwartz theory. This hinders the analysis of the evolution equations for the distributions of wealth.

Studying the evolution of the Lorenz curve, rather than the population distribution over wealth space $\posreal$, may alleviate these difficulties; since, in the Lorenz curve framework a partial or total oligarchy becomes a simple jump discontinuity at $f=1$. Therefore there is promise for understanding the solutions to the evolution equations of the Lorenz curves using the classical theory of weak solutions.

Therefore, while the evolution equations of \cref{eq:LorDynGen} do not appear \textit{prima facie} to be more pleasant, our motivation is to avoid dealing with the ``oligarch distribution'' of \cref{eq:oligarchDist} represented as population density in wealth space.

In spite of the Lorenz equations' apparent nastiness, our main results provide novel differential geometric understandings of the space of Lorenz curves that imbue infinite-dimensional variational principles to the $\LL$-dynamics associated to common gradient flows in the space of probability measures.

Namely, we show that formally viewing the space of Lorenz curves as an infinite-dimensional Riemannian manifold preserves gradient structures arising from $2$-Wasserstein theory and its variants. The variational structure of Fokker-Planck equations is not lost in the intrinsic Lorenz dynamics, and a curve of steepest ascent in the space of probability measures remains so once transformed to the space of Lorenz curves when viewed using a ``compatible'' geometry, considered here for the first time.

\begin{figure}[ht]
\centering
\includegraphics[trim={7cm 7cm 4cm 4.5cm},clip,width=.75\textwidth]{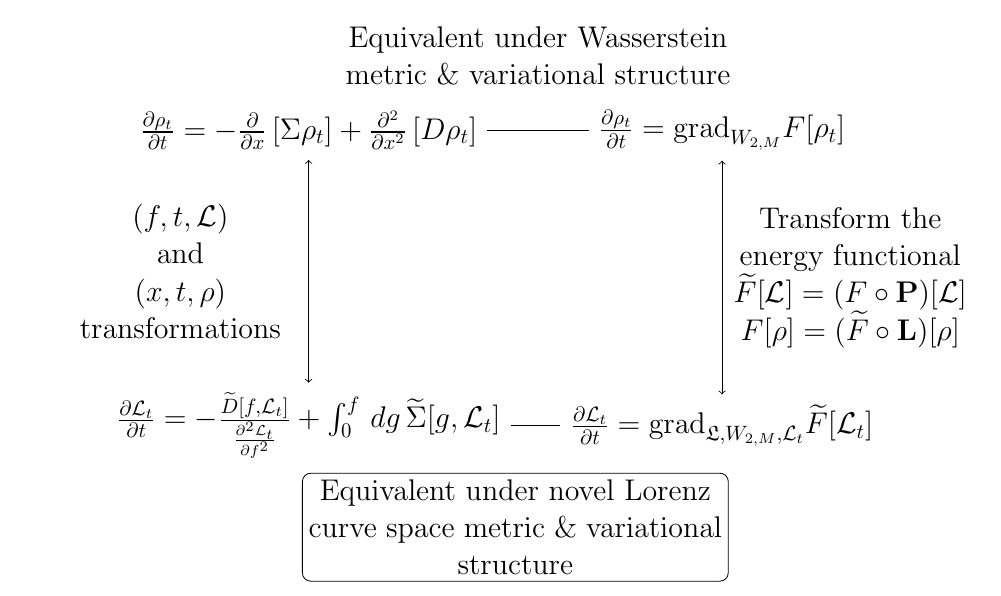}
\caption{The left hand side of this diagram was established in \cite{LorDyn2024}; whereas, the top portion has been actively explored since around 2000 \cite{MR2565840,MR1617171,MR1842429}. The portion of this diagram focused on in the paper is the boxed text.}
\label{fig:mainPoint1}
\end{figure}

\subsection{Outline}
In \cref{sec:prelims}, we set out the notation used in the paper, state some technical results that are used across the different sections, and codify the notion of the tangent space of Lorenz curves. Some key results of \cite{LorDyn2024}, which are repeatedly used in the proofs below, are stated as a convenient reference for the reader in \cref{sec:reviewOfResultsLorDyn}.

\Cref{sec:WassersteinGradFlowCompat,sec:w2MIsom} present the results regarding Wasserstein theory. We endow the space of Lorenz curves with a novel infinite-dimensional Riemannian geometry such that the variational principles of classical (\cref{ssec:linMobW2}) and non-linear (\cref{ssec:nonlinMobW2M}) mobility 2-Wasserstein gradient flows are preserved in the transformed dynamics. Highly nonlinear equations in the space of Lorenz curves are shown to evolve as curves of steepest descent. This relationship is depicted in \cref{fig:mainPoint1}. 

After establishing the variational principles in both sets of variables, an isometry result is proven in \cref{sec:w2MIsom} using the Riemannian metric structures identified in the prior section.

In \cref{sec:cdSpaceTheory}, similar results are presented for a formal infinite-dimensional Riemannian manifold over the space of probability measures that was first investigated in \cite{CDSpaces2024} and closely relates to evolution equations arising from econophysics models. In particular, the metric tensor of the $\cdspace$ spaces is different from that of $W_{2,M}$ and thus the compatible Riemannian structures differ as well. 

The novel metrics over the space of Lorenz curves offer physically meaningful ways to measure distances when an underlying kinetic premise is assumed, which is discussed at the end of \cref{sec:cdSpaceTheory}.

The Riemannian structures introduced in the paper are summarized, by their Onsager operators, in \cref{tab:onsagerOpComparison}.

\begin{table}[ht]
    \centering
    \begin{tabular}{ll|cc} 
        \toprule
        \multicolumn{2}{c|}{\textbf{Probability measures}} & \multicolumn{2}{c}{\textbf{Lorenz curves}} \\
        \midrule \midrule
        \textbf{Space} & \textbf{Onsager operator} & \textbf{Compatible space} & \textbf{Onsager operator} \\
        \midrule
        $W_2$ & $-\partial_x(\rho \partial_x [\cdot])$ & $\lorenzsub{W_2}$ & $-(\partial_{ff})^{-1}[\cdot]$ \\
        $W_{2,M}$ & $-\partial_x(M(\rho)\partial_x[\cdot])$ & $\lorenzsub{W_{2,M}}$ & $-\int^fdg \,m(1/\LL_{gg})\partial_g (\partial_{gg})^{-1}[\cdot]$ \\
        $\cdspacearg{\rho}$ & $\partial_{xx}(\rho\partial_{xx} [\cdot])$ & $\lorenzsub{\cdspacearg{\rho}}$ & $(\LL_{ff})^{-2}\times[\cdot]$ \\
        $\cdspace$ & $\partial_{xx}(D[x,\rho]\partial_{xx}[\cdot])$ & $\lorenzsub{\cdspace}$ & $\widetilde{d}[f,\LL](\LL_{ff})^{-2}\times[\cdot]$ \\
        \bottomrule
    \end{tabular}
    \caption{A comparison of the Onsager operators for the compatible manifolds presented here. The gradient structure is given by the Onsager operator acting on the differential of a functional.}
    \label{tab:onsagerOpComparison}
\end{table}

\section{Preliminaries}\label{sec:prelims}
\subsection{Notation}\label{ssec:notation}
The space of positive probability distributions defined over a set $\Omega \subset \mathbb{R}^n$ that are absolutely continuous with respect to the Lebesgue measure will be denoted by $\pplusac{\Omega}.$ The subset of these with finite second moment is $\ptwoac{\Omega}$.

The Lorenz curve of $\rho \in \pplusac{\real}$ is $\Lbf[\rho]$. Therefore $\Lbf[\rho]$ is a map from $[0,1]$ to $\mathbb{R}$. Given a Lorenz curve, $\LL$, the positive probability density $\Pbf[\LL]\in\pplusac{\real}$ satisfies $\left(\Lbf\circ\Pbf\right)[\LL] = \LL$. The cumulative distribution function (cdf) of a density $\rho$ will be denoted by $C[\rho]$ and the inverse cdf by $G[\rho].$

Where $I\subseteq \mathbb{R}$ is an interval, $\lorenz{I} = \left\{ \Lbf[\rho] \, :\, \rho \in \pplusac{I}\right\}$. That is, $\lorenz{I}$ is the set of Lorenz curves over the base space $I$, an interval of $\mathbb{R}$.

When there is a formal infinite-dimensional Riemannian manifold structure over some subset of $\pplusac{I}$, such as $W_2(I)$, and we are defining a \textit{compatible}\footnote{This notion is made clear later.} metric structure over the set $\lorenz{I}$, we will refer to resulting infinite-dimensional Riemannian manifold of Lorenz curves by, for example, $\lorenzsub{W_2}$. Often the subset $I$  of $\real$ will be omitted for notational clarity. The four considered here are: $\lorenzsub{W_2}$, $\lorenzsub{W_{2,M}}$, $\lorenzsub{\cdspacearg{\rho}}$, and $\lorenzsub{\cdspace}$.

Let $\rho(x,t): \Omega \times \mathbb{R} \rightarrow \mathbb{R}^+$ be such that for each $t$, $\rho(\cdot, t) \in \pplusac{\Omega}.$ That is, $\rho(\cdot,t)$ is a time-parameterized curve through the space of positive probability distributions. Whenever there is a time-parameterized curve through a space of functions, we let $\rho_t(x) = \rho(x,t).$ \textit{The subscript $t$ is not a time derivative.}

The shorthand for a time derivative will be a centered dot above a time-dependent quantity, such as $\dot{\rho}_t(x).$

For a functional $F: \pplusac{I} \rightarrow \mathbb{R}$, we always use $\widetilde{F}:\lorenz{I} \rightarrow \mathbb{R}$ to denote the functional such that \[F[\rho] = \left(\widetilde{F}\circ\Lbf\right)[\rho]\] for all $\rho \in \pplusac{I}$. Whatever $F$ measures of $\rho \in \pplusac{I}$ is given in the same scalar quantity by $\widetilde{F}$ of $\Lbf[\rho] \in \lorenz{I}.$

Riemannian structures will be endowed upon the spaces of probability measures and Lorenz curves. Let $\mathcal{M}$ be a manifold and $T_u\mathcal{M}$ be the tangent space at $u\in\mathcal{M}$ then for $h_1,h_2\in T_u\mathcal{M}$, the Riemannian metric tensor is expressed as $\langle h_1, h_2\rangle _{\mathcal{M},u}.$ 

The burdensome writing of the manifold in question as a subscript to the inner product is necessary in this paper as there will be several variations of similar spaces of functions. This will help us keep track. If there are two broadly similar Riemannian spaces $\mathcal{M}_\alpha$ and $\mathcal{M}_\beta$, which are ``variations on a theme,'' we may denote the metric tensors by $\langle\,,\,\rangle_{\mathcal{M},\alpha,u}$ and $\langle\,,\,\rangle_{\mathcal{M},\beta,u}$ to avoid multiple levels of subscripts.

The canonical $L^2(\Omega)$ inner product will be indicated by $(h_1,h_2)$.

For a functional $F:\mathcal{M}\rightarrow \mathbb{R}$, the gradient of $F$ at $\hat{u}_0\in\mathcal{M}$ is \[\totdd{}{t}\bigg|_{t=0}\left(F\circ u\right)(t) = \left \langle\grad{\mathcal{M}}{F[\hat{u}_o]},\ddt{u_t}\bigg|_{t=0} \right\rangle_{\mathcal{M},\hat{u}_0}\] for all smooth curves $u_t: [-T,T] \rightarrow \mathcal{M}$ passing through $\hat{u}_0$ at $t=0$.

When the manifold is a space of functions over a base space $\Omega \subset \mathbb{R}^n$, we can define the canonical differential, the Fr\'echet derivative of a functional when it exists, to be \[\left( \frechet{F}{u_t}[u_0],\ddt{u_t}\bigg|_{t=0} \right) = \int_\Omega\,dx\,\left(\frechet{F}{u_t}[u_0](x) \right)\left(\ddt{u_t}\bigg|_{t=0}(x)\right).\]

\subsection{Some technical results}\label{ssec:techResults}
Here we state several results that are used across the different sections of the paper. The proofs of the two lemmas are presented in \cref{apdx:techLemmasProofs}.

For a smooth, time-parameterized curve $\rho_t\in \pplusac{\real}$, recall this paper's convention that $C[\rho_t]:\real\rightarrow[0,1]$ and $G[\rho_t]:[0,1]\rightarrow\real$ are the cdf and inverse cdf of $\rho_t$, respectively. Let $F:\pplusac{\real}\rightarrow\real$ and $\widetilde{F}:\lorenz{\real}\rightarrow\real$ be such that $F[\rho] = (\widetilde{F}\circ\Lbf)[\rho]$ for all $\rho\in\pplusac{\real}$.

The infinitesimal perturbation of the density, which integrates to zero over $\real$, is denoted $h(x) = \ddt{}\big|_{t=0}\rho_t(x)$.

The first two results give the variations of the inverse cdf and Lorenz curve under a perturbation of the density.\footnote{If the density, $\rho_t,$ varied according to a McKean-Vlasov equation then \cref{lem:frechetInverseCDF,lem:frechetLorenzMap} would be precursors to the work in \cite{LorDyn2024}.}

\begin{lemma}\label{lem:frechetInverseCDF}
Fix $f\in [0,1]$ then
\begin{equation*}
    \ddt{}\bigg|_{t=0}G[\rho_t](f) = -\frac{\int_{-\infty}^{G[\rho_0](f)}\,dy\,h(y)}{\left(\rho_0\circ G[\rho_0]\right)(f)}.
\end{equation*}    
\end{lemma}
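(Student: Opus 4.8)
The plan is to differentiate, with respect to $t$, the identity that $G[\rho_t]$ is the inverse of the cumulative distribution function $C[\rho_t]$. For every $t$ and every $f\in[0,1]$ we have $C[\rho_t]\bigl(G[\rho_t](f)\bigr)=f$, where $C[\rho_t](x)=\int_{-\infty}^x\,dy\,\rho_t(y)$. Since $\rho_t\in\pplusac{\real}$, the cdf is strictly increasing and continuously differentiable in its spatial argument with $\partial_x C[\rho_t](x)=\rho_t(x)>0$, so $G[\rho_t]$ is a genuine differentiable inverse; this strict positivity is also exactly what makes the denominator $(\rho_0\circ G[\rho_0])(f)$ in the claimed formula nonzero, so the right-hand side is well defined.

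First I would record the two partial derivatives of the map $(t,x)\mapsto C[\rho_t](x)$: the spatial derivative is $\rho_t(x)$ as noted above, and the time derivative at $t=0$ is $\partial_t\big|_{t=0}C[\rho_t](x)=\int_{-\infty}^x\,dy\,h(y)$, obtained by differentiating under the integral sign. Then I would apply the chain rule to $C[\rho_t]\bigl(G[\rho_t](f)\bigr)=f$, differentiating in $t$ and evaluating at $t=0$; since the right-hand side is constant in $t$ this gives
\[
\Bigl(\partial_t\big|_{t=0}C[\rho_t]\Bigr)\bigl(G[\rho_0](f)\bigr)+\rho_0\bigl(G[\rho_0](f)\bigr)\,\partial_t\big|_{t=0}G[\rho_t](f)=0.
\]
Substituting the expression for $\partial_t\big|_{t=0}C[\rho_t]$ and solving for $\partial_t\big|_{t=0}G[\rho_t](f)$ produces precisely the stated identity. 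An equivalent bookkeeping is to differentiate $\int_{-\infty}^{G[\rho_t](f)}\,dy\,\rho_t(y)=f$ directly via the Leibniz rule, which collapses the same two steps into one line.

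The only genuine subtlety — the main obstacle — is the regularity bookkeeping underpinning these manipulations: one must know \emph{a priori} that $t\mapsto G[\rho_t](f)$ is differentiable at $t=0$ before the chain rule can be invoked, and one must justify interchanging $\partial_t$ with the spatial integral in $C[\rho_t]$. The differentiability of $t\mapsto G[\rho_t](f)$ I would obtain either from the implicit function theorem applied to $(t,x)\mapsto C[\rho_t](x)-f$ near $(0,G[\rho_0](f))$ — using $\partial_x C[\rho_0]=\rho_0>0$ there as the nondegeneracy hypothesis — or directly from the standing smoothness assumptions on the curve $\rho_t$ in $\pplusac{\real}$; the interchange of derivative and integral is then a routine dominated-convergence argument under those same assumptions. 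Once differentiability is secured, the computation above is immediate.
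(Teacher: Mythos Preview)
Your proposal is correct and is essentially the same approach as the paper's: both differentiate the defining identity $C[\rho_t](G[\rho_t](f))=f$ in $t$. The paper's written proof presents this as an explicit first-order Taylor expansion of $C[\rho_t]$ about $(\rho_0,G[\rho_0](f))$ rather than invoking the chain rule in one line, but the content is identical; indeed the paper itself notes (just above \cref{apdx:techLemmasProofs}) that the result follows from differentiating $(C[\rho_t]\circ G[\rho_t])(f)=f$ in time, which is precisely your argument.
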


\begin{lemma}\label{lem:frechetLorenzMap}
    Let $f\in[0,1]$ then 
    \begin{equation*}
        \ddt{}\bigg|_{t=0}\Lbf[\rho_t](f) =\int_{-\infty}^{G[\rho_0](f)}\,dy\,yh(y) - G[\rho_0](f)\int_{-\infty}^{G[\rho_0](f)}\,dy\,h(y).
    \end{equation*}
\end{lemma}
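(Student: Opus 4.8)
The plan is to reduce the computation to \Cref{lem:frechetInverseCDF}. The starting point is the explicit integral representation of the Lorenz curve as a function of the cdf value,
\[
\Lbf[\rho_t](f) = \int_{-\infty}^{G[\rho_t](f)}\,dy\,y\,\rho_t(y),
\]
which follows from the parametric description recalled in the introduction after the substitution $x = G[\rho_t](f)$ (equivalently $f = C[\rho_t](x)$); in words, $\Lbf[\rho_t](f)$ is the partial first moment of $\rho_t$ truncated at its $f$-th quantile. An equivalent, sometimes handier form, obtained by the further change of variable $y = G[\rho_t](g)$, is $\Lbf[\rho_t](f) = \int_0^f\,dg\,G[\rho_t](g)$.

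Differentiating the first representation in $t$ at $t=0$ produces exactly two terms by the Leibniz rule --- a boundary term from the $t$-dependent upper limit and an interior term from the $t$-dependent integrand:
\[
\ddt{}\bigg|_{t=0}\Lbf[\rho_t](f) = G[\rho_0](f)\,(\rho_0\circ G[\rho_0])(f)\cdot\ddt{}\bigg|_{t=0}G[\rho_t](f) + \int_{-\infty}^{G[\rho_0](f)}\,dy\,y\,h(y).
\]
Substituting the value of $\ddt{}\big|_{t=0}G[\rho_t](f)$ from \Cref{lem:frechetInverseCDF}, the factor $(\rho_0\circ G[\rho_0])(f)$ cancels the denominator appearing there, and what survives is precisely the asserted identity. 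Thus the proof is essentially one line once the integral representation is in place.

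The substantive points are analytic, not algebraic. First, one must justify differentiation under the integral sign in the display above: a dominated-convergence argument using smoothness of $t\mapsto\rho_t$ and a moment control, since the base space is all of $\real$ and one wants $y\,\rho_t(y)$ and $y\,h(y)$ integrable uniformly for $t$ near $0$ (this is why the ambient theory works in $\ptwoac{\real}$). Second, at the endpoints $f\in\{0,1\}$ the quantile $G[\rho_0](f)$ may be infinite, which I would handle by passing to the limit from $f\in(0,1)$: since $h$ integrates to zero, $\int_{-\infty}^{x}h(y)\,dy\to 0$ as $x\to\pm\infty$, so the boundary contribution $G[\rho_0](f)\int_{-\infty}^{G[\rho_0](f)}\,dy\,h(y)$ vanishes in the limit provided a first-moment bound absorbs the growing prefactor. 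As an independent check one can instead differentiate the representation $\Lbf[\rho_t](f) = \int_0^f\,dg\,G[\rho_t](g)$, apply \Cref{lem:frechetInverseCDF} inside the integral, change variables $g = C[\rho_0](x)$, and integrate by parts once in $x$; this reproduces the same formula, trading the moving-limit boundary term for an integration-by-parts boundary term.
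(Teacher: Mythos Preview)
Your argument is correct and mirrors the paper's own proof: both write $\Lbf[\rho_t](f)=\int_{-\infty}^{G[\rho_t](f)}dy\,y\,\rho_t(y)$, apply the Leibniz rule to produce a boundary term and an interior term, and then invoke \Cref{lem:frechetInverseCDF} to simplify the boundary contribution. The additional discussion of analytic justifications and the alternative derivation via $\Lbf[\rho_t](f)=\int_0^f dg\,G[\rho_t](g)$ are fine supplements, but the core route is identical.
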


The next result can be stated in different sets of variables and both such expressions are instructive to have available. The relationship between the Fr\'echet derivative of $F$ with respect to the probability density and the Fr\'echet derivative of $\widetilde{F}$ with respect to a Lorenz curve is dependent only on the variable transformation between $(x,t,\rho)$ and $(f,t,\LL)$ and is therefore useful across the several metric structures considered.

\begin{proposition}\label{prop:frechetFunctionalChangeOfVar}
For $x\in\real,$
    \begin{equation}
        \frechet{F}{\rho}(x) = - \int_{-\infty}^x\,dy\,\int_{-\infty}^y\,du\,\rho(u)\frechet{\widetilde{F}}{\LL}\left(C[\rho](u)\right)
    \end{equation}
    or, equivalently for $f\in [0,1]$
    \begin{equation}
        \frechet{F}{\rho}\left(G[\rho](f)\right) = -\int_0^f\,dg\,\frac{1}{\rho\left(G[\rho](g)\right)}\int_0^g\,dl\,\frechet{\widetilde{F}}{\LL}(l).
    \end{equation}
\end{proposition}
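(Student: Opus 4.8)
The plan is to evaluate $\ddt{}\big|_{t=0}F[\rho_t]$ along a smooth curve $\rho_t\in\pplusac{\real}$ through $\rho_0$ in two different ways, with infinitesimal perturbation $h=\ddt{}\big|_{t=0}\rho_t$ (so $\int_\real h=0$). Directly from the definition of the Fr\'echet derivative, it equals $\big(\frechet{F}{\rho}[\rho_0],h\big)=\int_\real dx\,\frechet{F}{\rho}(x)\,h(x)$. On the other hand, since $F=\widetilde F\circ\Lbf$, the chain rule gives $\ddt{}\big|_{t=0}F[\rho_t]=\int_0^1 df\,\frechet{\widetilde F}{\LL}(f)\,\ddt{}\big|_{t=0}\Lbf[\rho_t](f)$, and the inner time derivative is supplied verbatim by \cref{lem:frechetLorenzMap}. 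Equating the two expressions and reading off the coefficient of $h(y)$ yields the statement; the rest of the argument is rearrangement of the resulting double integral.

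Substituting \cref{lem:frechetLorenzMap}, the second expression becomes $\int_0^1 df\,\frechet{\widetilde F}{\LL}(f)\int_{-\infty}^{G[\rho_0](f)}dy\,\big(y-G[\rho_0](f)\big)\,h(y)$. I would then flip each inner integral onto the complementary half-line using $\int_\real h=0$ and $\int_\real y\,h(y)\,dy=0$ — the mass and first-moment constraints that hold on the fixed-mean manifold where Lorenz curves live — so that the stray zeroth- and first-moment contributions cancel and one is left with $-\int_0^1 df\,\frechet{\widetilde F}{\LL}(f)\int_{G[\rho_0](f)}^{\infty}dy\,\big(y-G[\rho_0](f)\big)\,h(y)$. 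Changing variables $f=C[\rho_0](u)$, for which $df=\rho_0(u)\,du$ and $G[\rho_0](f)=u$, converts the outer $[0,1]$-integral into an integral over $\real$ in $u$.

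An application of Fubini swapping the $u$- and $y$-integrations over the region $\{u\le y\}$ then exhibits $\frechet{F}{\rho}(y)=-\int_{-\infty}^{y}du\,(y-u)\,\rho_0(u)\,\frechet{\widetilde F}{\LL}\big(C[\rho_0](u)\big)$; writing $y-u=\int_u^y dv$ and applying Fubini a second time over $\{u\le v\le y\}$ produces the nested double antiderivative, which is the first displayed formula (with $\rho_0=\rho$, as everything is evaluated at $t=0$). The second, equivalent, formula I would derive from the first by setting $x=G[\rho](f)$ and substituting $u=G[\rho](l)$ in the inner integral (so $\rho(u)\,du=dl$, with $l$ ranging over $[0,C[\rho](y)]$) and then $y=G[\rho](g)$ in the outer integral (so $dy=dg/\rho(G[\rho](g))$, with $g$ ranging over $[0,f]$), which immediately gives the quotient form.

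The one delicate point is the domain flip: the vanishing first moment of admissible perturbations is exactly what makes the extra moment terms cancel and is what makes the manipulation land on precisely the formulas as written. Without it one recovers $\frechet{F}{\rho}$ only up to an additive affine function of $x$ (on the fixed-mean manifold $\frechet{F}{\rho}$ is in any case only defined modulo affine functions, and the displayed representative is the one that decays to second order at $-\infty$), and one correspondingly selects the representative of $\frechet{\widetilde F}{\LL}$ with $\int_0^1\frechet{\widetilde F}{\LL}(l)\,dl=0$. Everything else is routine bookkeeping: when $\mathrm{supp}(\rho_0)$ is a proper subset of $\real$ the substitution $f=C[\rho_0](u)$ and the Fubini ranges are read relative to $\mathrm{supp}(\rho_0)$, and the integrability needed to justify the two Fubini applications is inherited from the decay of $h$ together with the regularity under which $\frechet{\widetilde F}{\LL}$ exists.
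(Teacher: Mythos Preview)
Your overall strategy---pair the Fr\'echet expression $\int\frechet{F}{\rho}\,h$ with the chain-rule expression $\int_0^1\frechet{\widetilde F}{\LL}\,\partial_t\Lbf[\rho_t]$, substitute \cref{lem:frechetLorenzMap}, change variables $f=C[\rho_0](u)$, and rearrange to isolate $h$---is exactly the paper's. The paper carries out the rearrangement by two integrations by parts (after first passing to the $x$-variable), whereas you do a domain flip followed by two Fubini swaps; these are interchangeable manoeuvres and land on the same formula.

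The one point to flag is your justification of the flip. You invoke $\int_\real yh\,dy=0$ on the grounds that ``Lorenz curves live on the fixed-mean manifold.'' That is not correct at the level of generality of this proposition: the space $\lorenz{\real}$ carries no first-moment constraint (cf.\ \cref{def:genericLorenzTangentSpace}, where $\eta(1)$ is unconstrained), and the proposition is invoked in the $W_2$ setting of \cref{sec:WassersteinGradFlowCompat} where the mean varies freely; only in the $\cdspace$ setting of \cref{sec:cdSpaceTheory} is the mean fixed. The paper does not perform a flip; it integrates by parts directly and absorbs the issue into ``assuming the vanishing of the boundary terms,'' the relevant boundary contribution being precisely $\big(\int_0^1\frechet{\widetilde F}{\LL}\big)\big(\int_\real yh\big)$---the same product your flip requires to vanish. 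Your final paragraph correctly identifies that without this the formula is recovered only modulo an affine function and that one may equivalently normalise $\frechet{\widetilde F}{\LL}$ to have zero mean on $[0,1]$; that caveat is the right resolution, but the claim that the first-moment constraint is intrinsic to the Lorenz setting should be dropped.
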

\begin{proof}
Consider a smooth, time-parameterized curve $\rho_t\in\pplusac{\real}.$ Since \begin{equation*}
        \totdd{}{t}\bigg|_{t=0}F[\rho_t] = \totdd{}{t}\bigg|_{t=0}(\widetilde{F}\circ\Lbf)[\rho_t],
    \end{equation*} we have \begin{equation}\label{eq:frechetRel1COV}
        \int_\real\,dx\,\frechet{F}{\rho_t}[\rho_0](x)h(x) = \int_0^1\,df\,\frechet{\widetilde{F}}{\LL_t}[\LL_0](f)\ddt{}\bigg|_{t=0}\Lbf[\rho_t](f).
    \end{equation}
    We focus on the right hand side of \cref{eq:frechetRel1COV} and invoke \cref{lem:frechetLorenzMap} to obtain \begin{align*}
        \int_0^1\,df\,\frechet{\widetilde{F}}{\LL_t}[\LL_0](f)\ddt{}\bigg|_{t=0}\Lbf[\rho_t](f) &= \int_0^1\,df\,\frechet{\widetilde{F}}{\LL_t}[\LL_0](f)\left[\int_{-\infty}^{G[\rho_0](f)}\,dy\,yh(y) - G[\rho_0](f)\int_{-\infty}^{G[\rho_0](f)}\,dy\,h(y)\right]\\
        &= \int_\real\,dx\,\rho_0(x)\frechet{\widetilde{F}}{\LL_t}[\LL_0]\left(C[\rho_0](x)\right)\left[\int_{-\infty}^x\,dy\,yh(y)-x\int_{-\infty}^x\,dy\,h(y)\right]\\
        &= -\int_\real\,dx\,\left[\int_{-\infty}^x\,du\,\rho_0(u)\frechet{\widetilde{F}}{\LL_t}[\LL_0]\left(C[\rho_0](u)\right)\right]\left[-\int_{-\infty}^x\,dy\,h(y)\right]\\
        &=- \int_\real\,dx\,\left[\int_{-\infty}^x\,dy\,\int_{-\infty}^y\,du\,\rho_0(u)\frechet{\widetilde{F}}{\LL_t}[\LL_0]\left(C[\rho_0](u)\right)\right]h(x),
    \end{align*} assuming the vanishing of the boundary terms from the integration by parts.

    Since the curve $\rho_t\in\pplusac{\real}$ was arbitrary, returning again to \cref{eq:frechetRel1COV} gives the conclusion in the $x\in\real$ variable.

    The integral transformation rules of \cref{sec:reviewOfResultsLorDyn} show the result in the cdf variable of $f.$
\end{proof}

\subsection{A tangent space for the space of Lorenz curves}\label{ssec:LorTanSpace}
The tangent space of $W_2(\Omega)$ at $\rho$, denoted $T_\rho W_2(\Omega)$, may be identified with measurable $h:\Omega\rightarrow\real$ such that $\int_\Omega\,dx\,h(x) = 0.$\footnote{This is natural since (equivalence classes of) the time derivatives of smooth curves are by definition elements of the tangent space and $\dot{\rho}_t$ is mean-zero over $\Omega$.} That is, $h\in T_\rho W_2(\Omega)$ is an admissible infinitesimal perturbation of the probability measure $\rho$ such that both the nonnegativity and the normalization of total probability of the resulting measure remain. A tangent vector $h$ may also be viewed as the difference between the densities of two probability measures.

Below we propose a sensible view of the tangent space over the space of Lorenz curves.

Take $\rho_1, \rho_2 \in \pplusac{\real}$ and set $\LL_i = \Lbf[\rho_i]$ for $i=1,2.$ The perturbation, $\eta:[0,1]\rightarrow\real$, we now consider is defined by $\eta(f) = \LL_2(f) - \LL_1(f)$. The properties held by a generic $\eta$ will characterize the tangent space.

We can quickly note that $\eta(0) = 0$. Next observe that $\totdd{\eta(f)}{f}\big|_{f=0} = 0$ since \begin{align*}
\totdd{}{f}\bigg|_{f=0} \eta(f) &= \totdd{}{f}\bigg|_{f=0} \left(\int_{-\infty}^{G[\rho_2](f)}\,dy\,y\rho_2(y)-\int_{-\infty}^{G[\rho_1](f)}\,dy\,y\rho_1(y) \right)\\
&=\left[ G[\rho_2](f)\rho_2(G[\rho_2](f))\totdd{G[\rho_2](f)}{f}-G[\rho_1](f)\rho_1(G[\rho_1](f))\totdd{G[\rho_1](f)}{f}\right]_{f=0}\\
&=G[\rho_2](0)-G[\rho_1](0)\\
&= 0,
\end{align*}
where we made use of \cref{eq:fDerivL,eq:ffDerivL} in \cref{sec:reviewOfResultsLorDyn} to apply $\totdd{G[\rho](f)}{f} = 1/\rho(G[\rho](f))$ and the positivity of the $\rho_i$.

A similar calculation shows that $\totdd{\eta(f)}{f}\big|_{f=1} = 0$. That said, $f=0,1$ are the only two values for which generic $\eta'(f)$ is zero. Since for arbitrary $f\in (0,1)$, $G[\rho_2](f)$ is not $G[\rho_1](f)$, but by virtue of working with positive measures, the behavior of the inverse cdf at $f=0$ and $1$ is clear.

\begin{definition}\label{def:genericLorenzTangentSpace}
    The set of admissible perturbations of Lorenz curves is the vector space \begin{equation}\label{eq:genericLorenzTangentSpace}
    \LorTan\defeq\left\{\eta:[0,1]\rightarrow\real\,:\, \eta(0)=0,\, \eta'(0) = 0,\, \text{and } \eta'(1) = 0 \right\}.
\end{equation} 
\end{definition} In the coming sections, this set will be endowed with different inner products for the purpose of identifying several novel gradient structures on the space of Lorenz curves.

\section{2-Wasserstein gradient flows}\label{sec:WassersteinGradFlowCompat}

The use of 2-Wasserstein, $W_2$, theory and optimal transport to view evolution equations as the gradient flows of meaningful energy functionals began around the turn of the twenty-first century with the two papers \cite{MR1617171,MR1842429}. For a broader review of the theory of optimal transport and its many applications, see \cite{MR4294651,MR4655923}.

Let $\Omega\subseteq \real^n$ be a measurable, convex set with smooth boundary. $W_2(\Omega)$ is a metric space and a formal infinite-dimensional Riemannian manifold over the set \begin{equation*}
    \ptwoac{\Omega}\defeq \left\{ \rho \in \p{\Omega} \,:\, \text{a.c. w.r.t. Lebesgue and }\int_\Omega\,dx\,\rho(x)|x|^2<\infty\right\}.
\end{equation*}
The metric tensor at $\rho\in\ptwoac{\Omega}$ is \begin{equation}\label{eq:w2metricTensor}
    \left\langle h_1,h_2\right \rangle_{W_2,\rho}\defeq \int_\Omega\,dx\,\rho(x) \nabla\psi_1\cdot\nabla\psi_2 \text{ where for } i=1,2 \, \begin{cases}
\divg(\rho\nabla\psi_i)=-h_i &\text{in } \Omega\\
\ddNorm{\psi_i} = 0 &\text{on } \partial\Omega,
\end{cases}
\end{equation} where the tangent vectors $h_i:\Omega\rightarrow\real$  are measurable and mean zero, in the sense that $\int_\Omega\,dx\,h_i(x)=0.$

The metric tensor, $ \left\langle \, , \,\right \rangle_{W_2,\rho}$, induces a gradient structure such that if $F:\ptwoac{\Omega}\rightarrow\real$ is smooth then \begin{equation*}
    \grad{W_2,\rho}{F[\rho]}= -\divg\left(\rho\nabla\frechet{F}{\rho}\right).
\end{equation*}

Following \cite{MR3150642}, we define the Onsager operator, $\KK_{\text{Otto},\rho}:T^*_\rho W_2(\Omega)\rightarrow T_\rho W_2(\Omega)$ to be \begin{equation}
    \KK_{\text{Otto},\rho}\defeq-\divg(\rho\nabla [\cdot]).
\end{equation} The Onsager operator is a nice way to present the gradient structure of manifolds such as these.

Consider a Fokker-Planck equation \begin{equation}\label{eq:w2gradflowgen}
    \ddt{\rho_t} = \grad{W_2,\rho_t}{F[\rho_t]}
\end{equation} generated by the 2-Wasserstein gradient flow of a functional $F:\ptwoac{I}\rightarrow\mathbb{R}$. We call a metric tensor on the space of Lorenz curves \textit{compatible} with the metric structure of $W_2(I)$ if the gradient flow of $\widetilde{F}:\lorenz{I}\rightarrow\mathbb{R}$ induced by the metric tensor in the space of Lorenz curve generates the same time-evolution as the dynamics given by carrying out the $(x,t,\rho)\mapsto (f,t,\LL)$ variable transformation of \cref{eq:w2gradflowgen}

Said another way, a metric tensor on the space of Lorenz curves will be considered compatible if the connection between the lower left and right equations in \cref{fig:mainPoint1} is completed. In such a case, the same variational principle is established in either set of variables. 

\subsection{Linear mobility}\label{ssec:linMobW2}
\begin{definition}\label{def:tensorLorW2}
    Given $\eta_1,\eta_2 \in \LorTan$ (see \cref{def:genericLorenzTangentSpace}), let \begin{equation}
        \left \langle \eta_1,\eta_2 \right \rangle_{\LLfrak,W_2,\LL} \defeq \int_0^1\,df\, \totdd{\eta_1(f)}{f}\totdd{\eta_2(f)}{f}.
    \end{equation} Let $T_\LL \lorenzsub{W_2}$ denote the tuple of $\LorTan$ and $\left\langle\,,\,\right\rangle_{\LLfrak,W_2,\LL}$.
\end{definition}

\begin{remark}
    Let $I\subseteq \real$. The set $\lorenz{I}$ along with the tangent space from \cref{def:tensorLorW2} will be referred to as $\lorenzsub{W_2}(I)$ or sometimes just $\lorenzsub{W_2}.$
\end{remark}

$T_\LL \lorenzsub{W_2}$ is an inner product space, which we refer to as the tangent space of $\lorenzsub{W_2}$ at $\LL\in \lorenz{I}$. Frequently we will abuse notation and refer to the elements of $\LorTan$ as the elements of $T_\LL \lorenzsub{W_2}$.\footnote{The inner product does not yet depend on the basepoint $\LL$ but it is worthwhile to have this notation set from the beginning.}

\begin{lemma}[First-order functional calculus]\label{lem:firstOrderCalcW2Lor}
    Let $\widetilde{F}:\lorenz{\real}\rightarrow\real$ then \begin{equation}
        \grad{{\LLfrak,W_2,\LL}}{\widetilde{F}} = \psi,
    \end{equation} where \begin{equation}\tottwodd{\psi}{f} = - \frechet{\widetilde{F}}{\LL}.\end{equation}
\end{lemma}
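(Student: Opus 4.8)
The plan is to compute the gradient directly from the definition of the Riemannian gradient applied to the metric tensor $\langle\,,\,\rangle_{\LLfrak,W_2,\LL}$ of \Cref{def:tensorLorW2}. Given a smooth curve $\LL_t$ through $\lorenz{\real}$ with $\dot{\LL}_0 = \eta \in \LorTan$, the gradient $\psi = \grad{\LLfrak,W_2,\LL}{\widetilde{F}}$ is characterized by
\begin{equation*}
\totdd{}{t}\bigg|_{t=0}\widetilde{F}[\LL_t] = \langle \psi, \eta\rangle_{\LLfrak,W_2,\LL} = \int_0^1\,df\,\psi'(f)\,\eta'(f),
\end{equation*}
while by definition of the Fr\'echet derivative the left-hand side equals $\int_0^1\,df\,\frechet{\widetilde{F}}{\LL}(f)\,\eta(f)$. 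So the task reduces to showing that the $\psi$ determined by $\psi'' = -\frechet{\widetilde{F}}{\LL}$ satisfies $\int_0^1 \psi'\eta' = \int_0^1 \bigl(\frechet{\widetilde{F}}{\LL}\bigr)\eta$ for every admissible $\eta$.

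The key step is integration by parts: $\int_0^1 \psi'\eta' = [\psi'\eta]_0^1 - \int_0^1 \psi''\eta = [\psi'\eta]_0^1 + \int_0^1 \bigl(\frechet{\widetilde{F}}{\LL}\bigr)\eta$, so I need the boundary term $[\psi'\eta]_0^1$ to vanish. This is exactly where the structure of $\LorTan$ from \Cref{def:genericLorenzTangentSpace} enters: since $\eta(0) = 0$, the lower endpoint contributes nothing, and since $\eta'(1) = 0$ — wait, the term involves $\eta$, not $\eta'$, at $f=1$. So the vanishing at $f=1$ must come instead from imposing a boundary condition on $\psi$, namely $\psi'(1) = 0$; and similarly one has freedom to pin down $\psi$ by, say, $\psi'(0) = 0$ or by normalizing $\psi(0)$, since the gradient is only determined as an element of the tangent space up to the relevant equivalence. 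I would therefore state in the proof that $\psi$ is the solution of the two-point boundary value problem $\psi'' = -\frechet{\widetilde{F}}{\LL}$ with $\psi'(0) = \psi'(1) = 0$ (a Neumann problem, solvable precisely because $\int_0^1 \frechet{\widetilde{F}}{\LL} = 0$, which follows from $\eta(0)=\eta'(0)=\eta'(1)=0$ forcing the relevant compatibility — or more directly from the fact that constants are not admissible tangent directions so the differential annihilates them). With those boundary conditions both endpoint contributions in $[\psi'\eta]_0^1$ vanish, and the identity holds for all $\eta \in \LorTan$, giving $\psi = \grad{\LLfrak,W_2,\LL}{\widetilde{F}}$ as claimed. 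This also matches the Onsager operator $-(\partial_{ff})^{-1}[\cdot]$ listed in \Cref{tab:onsagerOpComparison}, since $\psi = -(\partial_{ff})^{-1}\frechet{\widetilde{F}}{\LL}$ with the stated boundary conditions.

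The main obstacle I anticipate is the careful bookkeeping of boundary conditions and the well-posedness of the inversion of $\partial_{ff}$: one must check that the solvability condition $\int_0^1 \frechet{\widetilde{F}}{\LL}\,df = 0$ genuinely holds (so that a Neumann solution $\psi$ exists), and that the residual ambiguity in $\psi$ (an additive constant) is immaterial because it does not affect $\psi'$ and hence does not affect the pairing — i.e., $\psi$ is well-defined as a gradient vector even if not as a literal function. A secondary subtlety is that $\eta'(0)=0$ is not actually needed for the boundary terms to vanish in this particular computation; only $\eta(0)=0$ and the Neumann conditions on $\psi$ are used, so I would remark that the full set of constraints defining $\LorTan$ is what guarantees the compatibility/solvability condition rather than being consumed pointwise in the integration by parts. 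Modulo these care points, the proof is a short one-line integration by parts together with the correct choice of boundary data for $\psi$.
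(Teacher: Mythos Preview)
Your argument is correct and is essentially the same as the paper's: both equate the Fr\'echet and Riemannian expressions for $\totdd{}{t}\big|_{t=0}\widetilde{F}[\LL_t]$, expand the metric tensor of \Cref{def:tensorLorW2}, and integrate by parts once to read off $\psi'' = -\frechet{\widetilde{F}}{\LL}$. The paper is in fact less explicit than you are about the boundary terms and solvability---it simply remarks (just before the proof) that the gradient, being a tangent vector, satisfies $\psi(0)=\psi'(0)=\psi'(1)=0$, and then silently drops the boundary contribution in the integration by parts---so your more careful bookkeeping is a refinement rather than a departure.
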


One interpretation is that the Onsager operator of $\lorenzsub{W_2}$ acts as the inverse of the negative Poisson operator with datum $\frechet{\widetilde{F}}{\LL}.$ Moreover, since the gradient is a tangent space element, the boundary conditions on $\psi:[0,1]\rightarrow\real$ are that $\psi(0), \psi'(0),$ and $\psi'(1)$ are all zero.

\begin{proof}
Let $T>0$ and $\LL_t: [-T,T] \rightarrow \lorenz{\real}$ be a smooth curve. The rate of change of $\widetilde{F}\circ \LL_t$ at $t=0$ can be expressed in two equivalent ways: \begin{equation}\label{eq:rocW2Linear}\left(\frechet{\widetilde{F}}{\LL_t}[\LL_0], \ddt{\LL_t}\bigg|_{t=0}\right) = \dd{\left(\widetilde{F}\circ \LL_t\right)}{t}\bigg|_{t=0} =\left\langle\grad{\LLfrak,W_2,\LL_0}{\widetilde{F}}, \ddt{\LL_t}\bigg|_{t=0}\right\rangle_{\LLfrak,W_2,\LL_0}.\end{equation}

First, expand the r.h.s. of \cref{eq:rocW2Linear} using \cref{def:tensorLorW2}, \begin{align}
 \dd{\left(\widetilde{F}\circ \LL_t\right)}{t}\bigg|_{t=0} &= \left(\ddf{}\grad{\LLfrak,W_2,\LL_0}{\widetilde{F}}, \ddf{}\ddt{\LL_t}\bigg|_{t=0}\right)\\
&= -\left(\twoddf{}\grad{\LLfrak,W_2,\LL_0}{\widetilde{F}}, \ddt{\LL_t}\bigg|_{t=0}\right).\label{eq:rocW2Linear2}
\end{align} Comparing \cref{eq:rocW2Linear2} to the lh.s. of \cref{eq:rocW2Linear} implies that \[\frechet{\widetilde{F}}{\LL_t}[\LL_0] = - \twoddf{}\grad{\LLfrak,W_2,\LL_0}{\widetilde{F}}.\]

Let $\psi = \grad{\LLfrak,W_2,\LL_0}{\widetilde{F}}$ then $\psi$ satisfies \[\twoddf{\psi} = - \frechet{\widetilde{F}}{\LL_t}[\LL_0],\] as we sought.
\end{proof}

\begin{theorem}\label{thm:varEquivw2Lor}
    For $F:\ptwoac{\real}\rightarrow\real$, the $(f,t,\LL)$-transformed dynamics of \begin{equation*}
        \ddt{\rho_t} = \grad{W_2,\rho_t}{F[\rho_t]}
    \end{equation*} are equivalent to those of \begin{equation*}
        \ddt{\LL_t} = \grad{\LLfrak,W_2,\LL_t}{\widetilde{F}[\LL_t]},
    \end{equation*}
    where $\widetilde{F}$ satisfies $\left(\widetilde{F}\circ\Lbf\right)[\rho]=F[\rho]$ for any $\rho\in\ptwoac{\real}$.
\end{theorem}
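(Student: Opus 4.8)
The plan is to verify directly that applying the variable transformation $(x,t,\rho)\mapsto(f,t,\LL)$ to the $W_2$ gradient flow $\partial_t\rho_t=\grad{W_2,\rho_t}{F[\rho_t]}=-\divg(\rho_t\nabla\frechet{F}{\rho_t})$ produces exactly the equation $\partial_t\LL_t=\grad{\LLfrak,W_2,\LL_t}{\widetilde{F}[\LL_t]}$, whose right-hand side is characterized by \cref{lem:firstOrderCalcW2Lor} as the solution $\psi$ of $\psi_{ff}=-\frechet{\widetilde{F}}{\LL}$ with boundary data $\psi(0)=\psi'(0)=\psi'(1)=0$. So the target equation is $\partial_t\LL_t(f)=\psi(f)$ where $\psi_{ff}=-\frechet{\widetilde{F}}{\LL_t}$. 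Equivalently, differentiating twice in $f$, it suffices to show that the transformed dynamics satisfy $\partial_{ff}(\partial_t\LL_t)=-\frechet{\widetilde{F}}{\LL_t}$ together with the correct behaviour at the endpoints $f=0,1$.

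First I would compute $\partial_t\LL_t$ in terms of $\partial_t\rho_t$: this is precisely \cref{lem:frechetLorenzMap} with $h=\dot\rho_0=\partial_t\rho_t$, giving $\partial_t\LL_t(f)=\int_{-\infty}^{G[\rho_t](f)}dy\,y\,\dot\rho_t(y)-G[\rho_t](f)\int_{-\infty}^{G[\rho_t](f)}dy\,\dot\rho_t(y)$. Now substitute the Fokker-Planck right-hand side $\dot\rho_t=-\divg(\rho_t\nabla\frechet{F}{\rho_t})$; in one dimension over an interval $I$, this is $\dot\rho_t=-\partial_x(\rho_t\,\partial_x\frechet{F}{\rho_t})$. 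Integrating the two terms in $y$ by parts (with the boundary terms vanishing by the natural decay/flux conditions, mirroring the boundary-term assumptions already used in the proof of \cref{prop:frechetFunctionalChangeOfVar}) should collapse the expression. The cleanest route is then to differentiate $\partial_t\LL_t(f)$ twice in $f$ using the integral-transformation identities of \cref{sec:reviewOfResultsLorDyn} — in particular $\partial_f\LL = G[\rho](f)$, $\partial_{ff}\LL = 1/\rho(G[\rho](f))$, and $\totdd{G[\rho](f)}{f}=1/\rho(G[\rho](f))$ — to reduce $\partial_{ff}(\partial_t\LL_t)$ to a local expression in the $f$ variable.

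The crucial identification is then to recognize that local expression as $-\frechet{\widetilde{F}}{\LL_t}$. Here I would invoke \cref{prop:frechetFunctionalChangeOfVar} in its cdf form, $\frechet{F}{\rho}(G[\rho](f))=-\int_0^f dg\,\frac{1}{\rho(G[\rho](g))}\int_0^g dl\,\frechet{\widetilde{F}}{\LL}(l)$: differentiating this relation twice in $f$ expresses $\frechet{\widetilde{F}}{\LL}$ in terms of $\frechet{F}{\rho}$ evaluated along the inverse cdf, and matching it against what the transformed dynamics produced finishes the computation of the interior equation. Finally I would check the boundary conditions: $\partial_t\LL_t$ is a tangent vector to the space of Lorenz curves, so by the discussion preceding \cref{def:genericLorenzTangentSpace} it automatically satisfies $\eta(0)=\eta'(0)=\eta'(1)=0$, which are exactly the boundary conditions pinning down $\psi$ in \cref{lem:firstOrderCalcW2Lor}; hence the solution of the second-order ODE is unique and equals $\partial_t\LL_t$.

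I expect the main obstacle to be the bookkeeping of the double integration by parts in $y$ combined with the double differentiation in $f$ through the $\rho$-dependent limits of integration — keeping track of which boundary terms vanish and why (decay of $\rho_t$ and of the flux $\rho_t\partial_x\frechet{F}{\rho_t}$ at the ends of $I$, plus the behaviour of $G[\rho_t](f)$ as $f\to0,1$), and ensuring the chain-rule factors $1/\rho(G[\rho](f))$ are handled consistently. A secondary subtlety is that \cref{lem:firstOrderCalcW2Lor} is stated for $\widetilde{F}$ on $\lorenz{\real}$ while the theorem is over a general interval, but the argument is identical; the only genuine analytic input is the vanishing of boundary terms, which I would state as a standing regularity assumption consistent with the rest of the paper rather than belabor.
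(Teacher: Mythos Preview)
Your plan is correct and would go through; it uses the same two key ingredients as the paper (\cref{prop:frechetFunctionalChangeOfVar} and \cref{lem:firstOrderCalcW2Lor}) but organizes the computation differently. The paper does not go through \cref{lem:frechetLorenzMap} at all: instead it multiplies the Fokker--Planck equation by $x$ and integrates to obtain $\partial_t L_t(x)$ with $L_t(x)=\int_{-\infty}^x y\rho_t(y)\,dy$, then transforms the resulting $x$-integral directly into an $f$-integral, substitutes \cref{prop:frechetFunctionalChangeOfVar}, integrates by parts once in $g$, and arrives immediately at the closed form $\partial_t\LL_t(f)=-\int_0^f dg\int_0^g dl\,\frechet{\widetilde{F}}{\LL_t}(l)$, which is recognized as $\grad{\LLfrak,W_2,\LL_t}{\widetilde{F}}$ by \cref{lem:firstOrderCalcW2Lor}. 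Your route instead differentiates $\partial_t\LL_t$ twice in $f$ to get a local relation, then invokes the tangent-space boundary conditions to integrate back up uniquely. The paper's path is a bit shorter because it never leaves the integrated form and therefore does not need the separate boundary-condition check; your path has the mild advantage of making explicit why $\partial_t\LL_t$ lands in $\LorTan$, which the paper leaves implicit. The analytic content (vanishing of boundary terms, use of $\partial_g G=1/\rho$) is identical in both.
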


\begin{proof}
    Beginning with the $W_2$ gradient dynamics \begin{equation*}
        \ddt{\rho_t(x)}=-\ddx{}\left[\rho_t(x)\ddx{}\frechet{F}{\rho_t}(x)\right],
    \end{equation*} multiply both sides by $x$ and integrate to obtain \begin{equation*}
        \ddt{L_t(x)} = -\int_{-\infty}^x\,dy\,y\ddy{}\left[\rho_t(y)\ddy{}\frechet{F}{\rho_t}(y)\right],
    \end{equation*} where $L_t(x)= \int_{-\infty}^x\,dy\,y\rho_t(y).$

    Recall that $C[\rho_t]$ and $G[\rho_t]$ denote the cdf and inverse cdf, respectively, of $\rho_t.$

    Switching to the cdf integration variables, the last expression becomes\footnote{This step simultaneously invokes the integral and differential transformations of \cref{sec:reviewOfResultsLorDyn}. The arguments to the function $\rho_t$ are $G[\rho_t](g)$, which are suppressed for clarity of the intermediate computations.}
    \begin{equation*}
        \ddt{L_t(x)} = -\int_{0}^{C[\rho_t](x)}\,dg\,G[\rho_t](g)\frac{1}{\rho_t}\rho_t\ddg{}\left[(\rho_t)^2\ddg{}\frechet{F}{\rho_t}\left(G[\rho_t](g)\right)\right],
    \end{equation*}
    Next we insert the relationship between $\frechet{F}{\rho}$ and $\frechet{\widetilde{F}}{\LL}$ derived in \cref{prop:frechetFunctionalChangeOfVar} and simplify so that \begin{equation*}
        \ddt{L_t(x)} = \int_{0}^{C[\rho_t](x)}\,dg\,G[\rho_t](g)\ddg{}\left[\rho_t\int_0^g\,dl\,\frechet{\widetilde{F}}{\LL_t}(l)\right].
    \end{equation*} After which integration by parts and assuming the vanishing of boundary terms gives \begin{equation*}
        \ddt{L_t(x)} = -\int_{0}^{C[\rho_t](x)}\,dg\,\ddg{G[\rho_t](g)}\rho_t\int_0^g\,dl\,\frechet{\widetilde{F}}{\LL_t}(l).
    \end{equation*}

    Observing that $\ddg{G[\rho](g)} = 1/\rho$ and replacing $x$ with $G[\rho_t](f)$ gives \begin{equation}\label{eq:Ltdynproof1}
        \ddt{\LL_t(f)} = -\int_0^f\,dg\,\int_0^g\,dl\,\frechet{\widetilde{F}}{\LL_t}(l),
    \end{equation} since $\LL_t(f):=\left(L_t\circ G[\rho_t]\right)(f).$

    The proof is concluded by noting that the right hand side of \cref{eq:Ltdynproof1} is precisely the first-order calculus, $\grad{\LLfrak,W_2,\LL_t}{\widetilde{F}[\LL_t]}$, of $\lorenzsub{W_2}$ derived in \cref{lem:firstOrderCalcW2Lor}.
\end{proof}

\begin{example}
    Let $V:\real\rightarrow\real$ be a $C^1$ scalar potential function and $W:\real \times \real \rightarrow\real$  be a symmetric and $C^1$ function, which should be viewed as the interaction energy density of particles at positions $x$ and $y$. The $W_2$ gradient flow of the classical free-energy function, $F=\mathcal{V}+\mathcal{W}-S,$ given by a potential energy, an interaction energy, and Boltzmann entropy \begin{equation*}
        F[\rho] = \int_\real\,dx\,\rho(x)V(x)+\frac{1}{2}\int_\real\,dx\,\int_\real\,dy\,W(x,y)\rho(x)\rho(y)+\int_\real\,dx\,\rho(x)\log \rho(x)
    \end{equation*}
    has Lorenz dynamics in the $(f,t,\LL)$ variables given by the $\lorenzsub{W_2}$ gradient flow of 
    \begin{equation*}
       \widetilde{F}[\LL] = \int_0^1\,df\, V\left(\totdd{\LL}{f}\right)+\frac{1}{2}\int_0^1\,df\,\int_0^1\,dg\, W\left(\totdd{\LL}{f},\totdd{\LL}{g}\right)-\int_0^1\,df\, \log \left(\tottwodd{\LL}{f}\right).
    \end{equation*}
\end{example}

\subsection{Non-linear mobility}\label{ssec:nonlinMobW2M}
In the 2010s, an adaptation of the classic 2-Wasserstein space emerged that modified the dynamic formulation of the metric to include a non-linear mobility term \cite{MR2565840,MR2448650,MR2672546,MR2921215}. By including such a term, the modified geometry of the space was able to better reflect kinetic premises in which the ease of particles transiting through a domain may depend on the local density of particles in a non-linear way.

In this section, we modify the definition of the metric tensor defined on the tangent space of $\lorenz{I}$, and show that the variational principles of $W_{2,M}$ are compatible under the modified geometry imposed on the space of Lorenz curves.

Let $M(\rho) = m(\rho)\rho$, where $m:\posreal\rightarrow\posreal.$
\begin{definition}[Metric tensor]\label{def:tensorLorW2M}
    Given $\eta_1,\eta_2 \in \LorTan$ , define \begin{equation}
        \left \langle \eta_1,\eta_2 \right \rangle_{\LLfrak,W_{2,M},\LL} \defeq \int_0^1\,df\, \left(m\left(\frac{1}{\tottwodd{\LL}{f}}\right)\right)^{-1}\totdd{\eta_1(f)}{f}\totdd{\eta_2(f)}{f}.
    \end{equation}
    Let $T_\LL \lorenzsub{W_{2,M}}$ denote the pair $\LorTan$ and $\left \langle \, , \, \right \rangle_{\LLfrak,W_{2,M},\LL}$
\end{definition}

\begin{remark}
    Let $I\subseteq \real$. The set $\lorenz{I}$ along with the tangent space from \cref{def:tensorLorW2M} will be referred to as $\lorenzsub{W_{2,M}}(I)$ or sometimes just $\lorenzsub{W_{2,M}}.$
\end{remark}

\begin{lemma}[First-order functional calculus]\label{lem:firstOrderCalcW2MLor}
    Let $\widetilde{F}:\lorenz{\real}\rightarrow\real$ then \begin{equation}
        \grad{{\LLfrak,W_{2,M},\LL}}{\widetilde{F}} = \psi,
    \end{equation} where \begin{equation*}\totdd{}{f}\left(\frac{1}{m}\totdd{}{f}\psi\right) = - \frechet{\widetilde{F}}{\LL}\end{equation*} and the argument of $m$ is $\left(\tottwodd{\LL(f)}{f}\right)^{-1}$.
\end{lemma}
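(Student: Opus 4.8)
The plan is to follow the proof of \cref{lem:firstOrderCalcW2Lor} almost verbatim; the only new ingredient is that the mobility weight $\left(m\!\left(1/\twoddf{\LL}\right)\right)^{-1}$ now sits inside the $f$-integral defining the metric tensor and is carried along through the integration by parts. Concretely, I would fix $T>0$ and a smooth curve $\LL_t\colon[-T,T]\to\lorenz{\real}$ with $\LL_0=\LL_t|_{t=0}$, set $\eta\defeq\ddt{\LL_t}\big|_{t=0}\in\LorTan$ and $\psi\defeq\grad{\LLfrak,W_{2,M},\LL_0}{\widetilde{F}}$, and express the rate of change of $\widetilde{F}\circ\LL_t$ at $t=0$ in the two equivalent ways,
\[
\left(\frechet{\widetilde{F}}{\LL}[\LL_0],\,\eta\right)=\dd{\left(\widetilde{F}\circ\LL_t\right)}{t}\bigg|_{t=0}=\left\langle\psi,\,\eta\right\rangle_{\LLfrak,W_{2,M},\LL_0}.
\]
Expanding the last pairing with \cref{def:tensorLorW2M} turns the right-hand side into $\int_0^1\,df\,\left(m\!\left(1/\twoddf{\LL_0}\right)\right)^{-1}\left(\totdd{}{f}\psi\right)\left(\totdd{}{f}\eta\right)$.

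The crucial step is then a single integration by parts in $f$, which yields $-\int_0^1\,df\,\totdd{}{f}\!\left(\frac{1}{m}\totdd{}{f}\psi\right)\eta$ plus the boundary term $\left[\frac{1}{m}\left(\totdd{}{f}\psi\right)\eta\right]_{f=0}^{f=1}$, where throughout the argument of $m$ is $\left(\twoddf{\LL_0}\right)^{-1}$. The boundary term vanishes because both $\psi$ and $\eta$ lie in $\LorTan$ (\cref{def:genericLorenzTangentSpace}): at $f=0$ we have $\eta(0)=0$ (as well as $\psi'(0)=0$), and at $f=1$ we have $\psi'(1)=0$, which kills $(1/m)\,\totdd{}{f}\psi$ there. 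Since the smooth curve $\LL_t$ through $\LL_0$ — hence the perturbation $\eta\in\LorTan$ — was arbitrary, comparing the result with the left-hand side forces $\frechet{\widetilde{F}}{\LL}[\LL_0]=-\totdd{}{f}\!\left(\frac{1}{m}\totdd{}{f}\psi\right)$, which is exactly the claimed identity $\totdd{}{f}\!\left(\frac{1}{m}\totdd{}{f}\psi\right)=-\frechet{\widetilde{F}}{\LL}$ with $m$ evaluated at $\left(\twoddf{\LL_0}\right)^{-1}$. As in \cref{lem:firstOrderCalcW2Lor}, the membership $\psi\in\LorTan$ simultaneously records the side conditions $\psi(0)=\psi'(0)=\psi'(1)=0$ that make this second-order problem for $\psi$ well-posed.

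The one point meriting a remark rather than a computation is the admissibility of the weight $1/m$ appearing both in the metric tensor and in the final equation: by hypothesis $m\colon\posreal\to\posreal$, and by the differential transformation rules recalled in \cref{sec:reviewOfResultsLorDyn} its argument $\left(\twoddf{\LL_0}\right)^{-1}$ is the density $\Pbf[\LL_0]$ read in the cdf variable, which is strictly positive; hence $\left(m\!\left(1/\twoddf{\LL_0}\right)\right)^{-1}$ is a genuine finite positive factor along the curve, and all the manipulations above are legitimate under the same standing regularity assumptions that make the boundary terms vanish. I expect the only real obstacle to be precisely this bookkeeping — the vanishing of the boundary term together with the admissibility of the mobility weight — since there is no new analytic content beyond the linear-mobility case treated in \cref{lem:firstOrderCalcW2Lor}.
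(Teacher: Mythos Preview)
Your proposal is correct and follows the template of \cref{lem:firstOrderCalcW2Lor} verbatim: a single integration by parts on the metric pairing, with the boundary contribution $\bigl[(1/m)\,\psi'\,\eta\bigr]_0^1$ killed by $\eta(0)=0$ at the left endpoint and $\psi'(1)=0$ at the right, delivers the claimed equation for $\psi$ directly.

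The paper's own argument is a mild variant. Rather than integrating by parts only on the metric side, it also manipulates the $L^2$ pairing: two integrations by parts there --- with the weight $m$ artificially inserted and removed --- bring the left-hand side into the form $\int_0^1 df\,\bigl[\int_0^f dg\,m\int_0^g dh\,\frechet{\widetilde{F}}{\LL_t}[\LL_0]\bigr]\,\partial_f\!\bigl[(1/m)\,\partial_f\eta\bigr]$, while one integration by parts on the metric side produces $-\int_0^1 df\,\psi\,\partial_f\!\bigl[(1/m)\,\partial_f\eta\bigr]$. Matching the two gives the explicit antiderivative
\[
\psi(f)=-\int_0^f\,dg\,m\!\left(\frac{1}{\twoddg{\LL_0}}\right)\int_0^g\,dh\,\frechet{\widetilde{F}}{\LL_t}[\LL_0](h),
\]
from which the differential equation is then read off. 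Your route is the more economical one for the lemma as stated; the paper's detour buys this closed-form integral representation of the gradient as a by-product.
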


\begin{proof}
Let $T>0$ and $\LL_t: [-T,T] \rightarrow \lorenz{\real}$ be a smooth curve. Then \begin{equation}\label{eq:rocW2M}\left(\frechet{\widetilde{F}}{\LL_t}[\LL_0], \ddt{\LL_t}\bigg|_{t=0}\right) = \dd{\left(\widetilde{F}\circ \LL_t\right)}{t}\bigg|_{t=0} =\left\langle\grad{\LLfrak,W_{2,M},\LL_0}{\widetilde{F}}, \ddt{\LL_t}\bigg|_{t=0}\right\rangle_{\LLfrak,W_{2,M},\LL_0}.\end{equation}

Focusing first on the $L^2$ expression, in \cref{eq:rocW2M}, for the rate of change, note that \begin{align}
\left(\frechet{\widetilde{F}}{\LL_t}[\LL_0], \ddt{\LL_t}\bigg|_{t=0}\right) &= -\int_0^1\,df\,\left(\int_0^f\,dg\,\frechet{\widetilde{F}}{\LL_t}[\LL_0]\right)\left(\ddf{}\ddt{\LL_t}\bigg|_{t=0}\right) \nonumber\\
&=\int_0^1\,df\,\left[\int_0^f\,dg\,m\left(\frac{1}{\twoddg{\LL_0}}\right)\left(\int_0^g\,dh\,\frechet{\widetilde{F}}{\LL_t}[\LL_0]\right)\right]\nonumber \\
&\qquad\times\ddf{}\left[\left(m\left(\frac{1}{\twoddf{\LL_0}}\right)\right)^{-1}\left(\ddf{}\ddt{\LL_t}\bigg|_{t=0}\right)\right].\label{eq:rocW2ML2v2}
\end{align}

Turning to the r.h.s. of \cref{eq:rocW2M} and expanding it using \cref{def:tensorLorW2M} gives
\begin{align}
    \left\langle\grad{\LLfrak,W_{2,M},\LL_0}{\widetilde{F}}, \ddt{\LL_t}\bigg|_{t=0}\right\rangle_{\LLfrak,W_{2,M},\LL_0} &= \int_0^1\,df\,\left(m\left(\frac{1}{\twoddf{\LL_0}}\right)\right)^{-1}\left(\ddf{}\grad{\LLfrak,W_{2,M},\LL_0}{\widetilde{F}}\right)\left(\ddf{}\ddt{\LL_t}\bigg|_{t=0}\right)\nonumber\\
    &= -\int_0^1\,df\,\grad{\LLfrak,W_{2,M},\LL_0}{\widetilde{F}}\ddf{}\left[\left(m\left(\frac{1}{\twoddf{\LL_0}}\right)\right)^{-1}\left(\ddf{}\ddt{\LL_t}\bigg|_{t=0}\right)\right]\label{eq:rocW2MLorExpr}.
\end{align}

Combining \cref{eq:rocW2ML2v2,eq:rocW2MLorExpr}, can conclude that \begin{equation}
    \grad{\LLfrak,W_{2,M},\LL_0}{\widetilde{F}} = - \left[\int_0^f\,dg\,m\left(\frac{1}{\twoddg{\LL_0}}\right)\left(\int_0^g\,dh\,\frechet{\widetilde{F}}{\LL_t}[\LL_0]\right)\right].
\end{equation}

With an eye towards the linear mobility case, observe that $\psi= \grad{\LLfrak,W_{2,M},\LL_0}{\widetilde{F}}$ satisfies \begin{equation*}
    \ddf{}\left(\frac{1}{m}\ddf{}\psi\right) = -\frechet{\widetilde{F}}{\LL_t}.
\end{equation*}

\end{proof}

\begin{theorem}\label{thm:varEquivw2MLor}
    For $F:\ptwoac{\real}\rightarrow\real$, the Lorenz dynamics in the variables $(f,t,\LL)$ of the $W_{2,M}$ gradient flow of $F$ \begin{equation*}
        \ddt{\rho_t} = \grad{W_{2,M},\rho_t}{F[\rho_t]}
    \end{equation*} are equivalent to those of \begin{equation*}
        \ddt{\LL_t} = \grad{\LLfrak,W_{2,M},\LL_t}{\widetilde{F}[\LL_t]},
    \end{equation*}
    where $\widetilde{F}$ satisfies $\left(\widetilde{F}\circ\Lbf\right)[\rho]=F[\rho]$ for any $\rho\in\ptwoac{\real}$.
\end{theorem}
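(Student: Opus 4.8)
The plan is to mirror the proof of \Cref{thm:varEquivw2Lor}, tracking the extra non-linear mobility factor through the change of variables. Starting from the $W_{2,M}$ gradient flow
\begin{equation*}
    \ddt{\rho_t(x)} = -\ddx{}\left[M(\rho_t(x))\ddx{}\frechet{F}{\rho_t}(x)\right] = -\ddx{}\left[m(\rho_t(x))\rho_t(x)\ddx{}\frechet{F}{\rho_t}(x)\right],
\end{equation*}
I would multiply by $x$, integrate from $-\infty$ to $x$ to pass to $L_t(x) = \int_{-\infty}^x dy\, y\rho_t(y)$, and then convert to the cdf integration variable $g$ using the integral and differential transformation rules of \Cref{sec:reviewOfResultsLorDyn}, exactly as in the linear case but carrying the factor $m(\rho_t)$ along. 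Since $\rho_t$ evaluated at $G[\rho_t](g)$ equals $(\twoddf{\LL_t})^{-1}$ at $g$, the mobility factor becomes $m\!\left((\twoddg{\LL_t})^{-1}\right)$, a function purely of the Lorenz curve.

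Next I would substitute the relationship between $\frechet{F}{\rho}$ and $\frechet{\widetilde{F}}{\LL}$ from \Cref{prop:frechetFunctionalChangeOfVar}, integrate by parts in $g$ (assuming vanishing boundary terms), and use $\ddg{G[\rho_t](g)} = 1/\rho_t$ to cancel the stray density factor coming from the transformation Jacobian. After replacing $x$ with $G[\rho_t](f)$ and recalling $\LL_t(f) = (L_t\circ G[\rho_t])(f)$, the expected outcome is
\begin{equation*}
    \ddt{\LL_t(f)} = -\int_0^f\,dg\,m\!\left(\frac{1}{\twoddg{\LL_t}}\right)\int_0^g\,dl\,\frechet{\widetilde{F}}{\LL_t}(l),
\end{equation*}
possibly after one further integration by parts to arrange the nesting of integrals and mobility factors so that it matches the closed-form gradient derived in \Cref{lem:firstOrderCalcW2MLor}. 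The proof then concludes by observing that this right-hand side is precisely $\grad{\LLfrak,W_{2,M},\LL_t}{\widetilde{F}[\LL_t]}$, i.e.\ the unique $\psi$ solving $\totdd{}{f}\!\left(\frac{1}{m}\totdd{}{f}\psi\right) = -\frechet{\widetilde{F}}{\LL_t}$ with the tangent-space boundary conditions $\psi(0)=\psi'(0)=\psi'(1)=0$.

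The main obstacle I anticipate is bookkeeping: there are now two nested integrations (one from passing to $L_t$, one from integration by parts) interacting with the mobility weight $m$, and I expect to need to be careful about which argument $m$ is evaluated at after the cdf substitution, and about matching the exact form produced by the flow to the exact form of the Onsager operator in \Cref{lem:firstOrderCalcW2MLor} — these may differ by an integration by parts that shuffles the $m$ factor between the "inner" and "outer" positions. A secondary subtlety, as in the linear case, is the justification that the boundary terms vanish; here this rests on the decay of $\rho_t$ and its spatial derivative at $\pm\infty$ (equivalently, the behavior of $\LL_t'$ and $\LL_t''$ near $f=0,1$), together with $m$ being evaluated on the admissible range of densities, and I would flag this as an assumption exactly as \Cref{thm:varEquivw2Lor} does rather than prove it in full generality.
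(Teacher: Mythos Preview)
Your proposal is correct and is precisely the approach the paper takes: the paper's own proof of \Cref{thm:varEquivw2MLor} consists of the single sentence ``The proof follows a similar progression as that of \Cref{thm:varEquivw2Lor},'' and your plan is exactly that progression, carried out with the mobility factor $m$ in place. Your anticipated endpoint $-\int_0^f dg\, m\!\left((\twoddg{\LL_t})^{-1}\right)\int_0^g dl\,\frechet{\widetilde{F}}{\LL_t}(l)$ already matches the closed-form gradient obtained in \Cref{lem:firstOrderCalcW2MLor}, so no further integration by parts is needed at that stage.
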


\begin{proof}
    The proof follows a similar progression as that of \cref{thm:varEquivw2Lor}.
\end{proof}

\section{Isometries between the Wasserstein spaces and the associated spaces of Lorenz curves}\label{sec:w2MIsom}
The results of \cref{sec:WassersteinGradFlowCompat} show that the variational principles of 2-Wasserstein spaces are maintained in the space of Lorenz curves so long as the appropriate metric tensor is selected. The interesting properties of the metric tensors defined above do not end there. In fact, the transformation between the spaces is an isometry in the sense of lengths induced by the metric tensors.

For $\rho_0, \rho_1 \in \ptwoac{\real}$, let $\Gamma(\rho_0, \rho_1)$ be the couplings of $\rho_0$ and $\rho_1$, that is joint measures on $\real \times \real$ with marginals $\rho_0$ and $\rho_1$, respectively.

The 2-Wasserstein optimal transport metric
\begin{equation*}
    d_{W_2}^2(\hat{\rho}_0,\hat{\rho}_1) = \inf_{\gamma \in \Gamma(\hat{\rho}_0,\hat{\rho}_1)}\int_{\real \times \real}\,d\gamma(x,y)\, |x-y|^2
\end{equation*} can be expressed using the celebrated Benamou-Brenier formula \cite{MR4655923} as \begin{equation*}
        d_{W_{2}}^2(\hat{\rho}_0,\hat{\rho}_1) = \inf_{\text{curves } \rho_t} \left\{\int_0^1\,dt\,\int_\real\,dx\,\rho_t \left(\ddx{\psi_t} \right)^2 \text{where } \ddt{\rho_t} = -\ddx{}\left(\rho_t\ddx{\psi_t}\right)\,:\, \rho_0=\hat{\rho}_0,\, \rho_1=\hat{\rho}_1\right\}.
\end{equation*} Since the Benamou-Brenier formula is an expression of action minimization, a formal tangent space norm may be deduced. In this case, the norm is that of a measure-weighted, negative first-order, homogeneous Sobolev space, which may be seen in \cref{eq:w2metricTensor}, the definition of $ \left\langle \, , \,\right \rangle_{W_2,\rho}$.

\begin{definition}[$\lorenzsub{W_2}$ global metric]
Let $||\cdot||_{\LLfrak,W_2,\LL}$ be the norm induced by \cref{def:tensorLorW2}. Then between two Lorenz curves $\hat{L}_0$ and $\hat{L}_1$, 
    \begin{align*}\label{eq:globalLorW2metric}
        d^2_{\LLfrak,W_2}(\hat{L}_0,\hat{L}_1) &= \inf_{\text{curves } \LL_t} \left\{\int_0^1\,dt\,||\dot{\LL}_t||^2_{\LLfrak,W_2,\LL_t} \, :\, \LL_0 = \hat{L}_0, \, \LL_1 = \hat{L}_1\right\}\\
        &=\inf_{\text{curves } \LL_t} \left\{\int_0^1\,dt\,\int_0^1\,df\,\left(\ddf{\dot{\LL}_t(f)}\right)^2 \, :\, \LL_0 = \hat{L}_0, \, \LL_1 = \hat{L}_1\right\}.
    \end{align*}
\end{definition}

\begin{proposition}[$W_{2}$ and $\lorenzsub{W_{2}}$ isometry]\label{prop:w2LorIsom}
    For two positive distributions $\hat{\rho}_0$ and $\hat{\rho}_1$ over $\real$, \[d_{W_2}^2(\hat{\rho}_0,\hat{\rho}_1) = d_{\LLfrak,{W_2}}^2(\Lbf[\hat{\rho}_0],\Lbf[\hat{\rho}_1]).\]
\end{proposition}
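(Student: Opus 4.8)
The plan is to show that the two variational problems defining $d_{W_2}^2$ and $d_{\LLfrak,W_2}^2$ are identical by transferring admissible curves (and their actions) back and forth under the Lorenz map $\Lbf$. Concretely, I would establish that for any smooth curve $\rho_t$ in $\ptwoac{\real}$ connecting $\hat\rho_0$ to $\hat\rho_1$, the curve $\LL_t \defeq \Lbf[\rho_t]$ connects $\Lbf[\hat\rho_0]$ to $\Lbf[\hat\rho_1]$ and satisfies
\begin{equation*}
\int_\real\,dx\,\rho_t(x)\left(\ddx{\psi_t}(x)\right)^2 = \int_0^1\,df\,\left(\ddf{\dot{\LL}_t(f)}\right)^2,
\end{equation*}
where $\psi_t$ is the Benamou-Brenier potential solving $\dot\rho_t = -\partial_x(\rho_t\partial_x\psi_t)$, and conversely that every admissible curve in the space of Lorenz curves arises this way from an admissible curve of densities. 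Granting this pointwise-in-$t$ identity of integrands, integrating over $t\in[0,1]$ and taking infima over the (bijectively matched) sets of admissible curves yields the equality of squared distances.

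The key steps, in order, would be: (i) Fix a curve $\rho_t$ and its velocity potential $\psi_t$; by \cref{lem:frechetLorenzMap} applied with $h = \dot\rho_t$, express $\dot{\LL}_t(f)$ as $\int_{-\infty}^{G[\rho_t](f)}\,dy\,y\,\dot\rho_t(y) - G[\rho_t](f)\int_{-\infty}^{G[\rho_t](f)}\,dy\,\dot\rho_t(y)$. (ii) Differentiate in $f$, using $\totdd{G[\rho_t](f)}{f} = 1/(\rho_t\circ G[\rho_t])(f)$ from \cref{sec:reviewOfResultsLorDyn}, so that the two boundary terms from the chain rule cancel and one is left with $\ddf{\dot{\LL}_t(f)} = -\int_{-\infty}^{G[\rho_t](f)}\,dy\,\dot\rho_t(y)$. (iii) Substitute the continuity equation $\dot\rho_t = -\partial_x(\rho_t\partial_x\psi_t)$ to collapse the inner integral to a flux: $-\int_{-\infty}^{G[\rho_t](f)}\,dy\,\dot\rho_t(y) = (\rho_t\partial_x\psi_t)(G[\rho_t](f))$, assuming the boundary term at $-\infty$ vanishes. (iv) Therefore $\left(\ddf{\dot{\LL}_t(f)}\right)^2 = (\rho_t\partial_x\psi_t)^2(G[\rho_t](f))$; now change variables $x = G[\rho_t](f)$, $df = \rho_t(x)\,dx$, to get $\int_0^1\,df\,\left(\ddf{\dot{\LL}_t(f)}\right)^2 = \int_\real\,dx\,\rho_t(x)(\partial_x\psi_t(x))^2$, which is exactly the Benamou-Brenier integrand. (v) For the reverse inclusion, given a curve $\LL_t$ in $\lorenzsub{W_2}$ with the prescribed endpoints, set $\rho_t \defeq \Pbf[\LL_t]$; since $\Lbf\circ\Pbf = \mathrm{id}$, this is a density curve with the right endpoints, and running steps (i)--(iv) in reverse recovers a velocity potential $\psi_t$ (defined up to an additive constant via $\partial_x\psi_t = (\rho_t)^{-1}(\text{flux})$) realizing the same action; hence the two admissible families and their action values are in bijection.

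The main obstacle I expect is rigorously justifying the vanishing boundary terms: at $-\infty$ in steps (ii) and (iii) (requiring decay of $y\,\dot\rho_t(y)$ and of the flux $\rho_t\partial_x\psi_t$), and implicitly the regularity/invertibility needed for the substitution $x = G[\rho_t](f)$ to be a valid change of variables (positivity of $\rho_t$, so that $G[\rho_t]$ is a genuine $C^1$ diffeomorphism from $(0,1)$ onto the support). In the formal Riemannian spirit of the paper these are assumed, paralleling the ``assuming the vanishing of boundary terms'' hypotheses already invoked in the proofs of \cref{prop:frechetFunctionalChangeOfVar} and \cref{thm:varEquivw2Lor}; a careful version would restrict to curves of smooth, strictly positive, sufficiently decaying densities. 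A secondary subtlety is that the Benamou-Brenier potential $\psi_t$ is only determined up to a constant (and only its gradient enters the action), so the bijection in step (v) is really between density curves and Lorenz curves, with potentials recovered modulo constants — but this does not affect the action and hence not the infimum.
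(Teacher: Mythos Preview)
Your overall strategy is exactly the paper's (which in fact proves only the more general $W_{2,M}$ case, \cref{thm:w2MLorIsom}, and obtains \cref{prop:w2LorIsom} by specializing $m\equiv 1$): compute $\ddf{\dot\LL_t}$, substitute the continuity equation to express it via $\partial_x\psi_t$, and then change variables $f\leftrightarrow x$ in the action integrand.

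However, there is a computational slip in your step~(ii) that propagates. Differentiating the expression from \cref{lem:frechetLorenzMap} in $f$ gives, after the cancellation of the two Leibniz boundary terms you describe,
\[
\ddf{\dot\LL_t(f)}=-\,\totdd{G[\rho_t](f)}{f}\int_{-\infty}^{G[\rho_t](f)}\,dy\,\dot\rho_t(y)
=-\,\frac{1}{(\rho_t\circ G[\rho_t])(f)}\int_{-\infty}^{G[\rho_t](f)}\,dy\,\dot\rho_t(y),
\]
not $-\int_{-\infty}^{G[\rho_t](f)}\,dy\,\dot\rho_t(y)$; the prefactor $1/\rho_t$ from $G'$ remains. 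With this correction, step~(iii) yields
\[
\ddf{\dot\LL_t(f)}=\big(\partial_x\psi_t\big)\!\big(G[\rho_t](f)\big),
\]
and then the change of variables in step~(iv) genuinely gives $\int_0^1 df\,(\ddf{\dot\LL_t})^2=\int_\real dx\,\rho_t(\partial_x\psi_t)^2$. As written, your step~(iii) produces $(\rho_t\partial_x\psi_t)(G[\rho_t](f))$, from which the change of variables would give $\int_\real dx\,\rho_t^3(\partial_x\psi_t)^2$, not the Benamou--Brenier integrand; so step~(iv) does not follow from your step~(iii). Once the missing $1/\rho_t$ is restored, the argument is correct and coincides with the paper's.
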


We can state and prove \cref{prop:w2LorIsom} for the more general case of possibly nonlinear mobility. Recall that we define $m(\rho)>0$ via $M(\rho) = m(\rho)\rho.$

\begin{definition}[$\lorenzsub{W_{2,M}}$ global metric]
Let $||\cdot||_{\LLfrak,W_{2,M},\LL}$ be the norm induced by \cref{def:tensorLorW2M}. Then between two Lorenz curves $\hat{L}_0$ and $\hat{L}_1$, 
    \begin{align}
        d^2_{\LLfrak,W_{2,M}}(\hat{L}_0,\hat{L}_1) &= \inf_{\text{curves } \LL_t} \left\{\int_0^1\,dt\,||\dot{\LL}_t||^2_{\LLfrak,W_{2,M},\LL_t} \, :\, \LL_0 = \hat{L}_0, \, \LL_1 = \hat{L}_1\right\}\nonumber\\
        &=\inf_{\text{curves } \LL_t} \left\{\int_0^1\,dt\,\int_0^1\,df\,\left(m\left(\frac{1}{\twoddf{\LL_t(f)}}\right)\right)^{-1}\left(\ddf{\dot{\LL}_t(f)}\right)^2 \, :\, \LL_0 = \hat{L}_0, \, \LL_1 = \hat{L}_1\right\}.\label{eq:LorW2MGlobalMetricv2}
    \end{align}
\end{definition}

\begin{remark}[The Generalized Benamou-Brenier Formula]
    The non-linear mobility 2-Wasserstein distance may be expressed as \begin{equation}
        d_{W_{2,M}}^2(\hat{\rho}_0,\hat{\rho}_1) = \inf_{\text{curves } \rho_t} \left\{\int_0^1\,dt\,\int_\real\,dx\,M(\rho_t) \left(\ddx{\psi_t} \right)^2 \text{where } \ddt{\rho_t} = -\ddx{}\left(M(\rho_t)\ddx{\psi_t}\right)\,:\, \rho_0=\hat{\rho}_0,\, \rho_1=\hat{\rho}_1\right\}.\label{eq:w2mBenamouBrenier}
    \end{equation}
\end{remark}

\begin{theorem}[$W_{2,M}$ and $\lorenzsub{W_{2,M}}$ isometry]\label{thm:w2MLorIsom}
    For two positive distributions $\hat{\rho}_0$ and $\hat{\rho}_1$ over $\real$, \[d_{W_{2,M}}^2(\hat{\rho}_0,\hat{\rho}_1) = d_{\LLfrak,{W_{2,M}}}^2(\Lbf[\hat{\rho}_0],\Lbf[\hat{\rho}_1]).\]
\end{theorem}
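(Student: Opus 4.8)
The plan is to show that the Benamou--Brenier-type action in \cref{eq:w2mBenamouBrenier} for $W_{2,M}$ and the action in \cref{eq:LorW2MGlobalMetricv2} for $\lorenzsub{W_{2,M}}$ agree term-by-term along corresponding curves, and that the admissible curves in the two infima are in bijection under the Lorenz transform $\rho \mapsto \Lbf[\rho]$. Concretely, let $\rho_t$ be a curve connecting $\hat{\rho}_0$ to $\hat{\rho}_1$ that is admissible for the right-hand side of \cref{eq:w2mBenamouBrenier}, i.e. $\dot{\rho}_t = -\partial_x(M(\rho_t)\partial_x\psi_t)$ for some velocity potential $\psi_t$, and set $\LL_t = \Lbf[\rho_t]$. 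Since $\Lbf$ and $\Pbf$ are mutual inverses (as recorded in \cref{ssec:notation}), this map is a bijection between curves in $\ptwoac{\real}$ with the prescribed endpoints and curves in $\lorenz{\real}$ with endpoints $\Lbf[\hat{\rho}_0],\Lbf[\hat{\rho}_1]$, so it suffices to prove the two integrands coincide.

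The key computation is to carry the $(x,t,\rho)\mapsto(f,t,\LL)$ variable transformation through both the continuity-equation constraint and the action density, using the integral and differential transformation rules recalled in \cref{sec:reviewOfResultsLorDyn}. First I would rewrite the constraint: starting from $\dot{\rho}_t = -\partial_x(M(\rho_t)\partial_x\psi_t)$, multiply by $x$ and integrate to get the evolution of $L_t(x)=\int_{-\infty}^x dy\, y\rho_t(y)$, then switch to the cdf variable exactly as in the proof of \cref{thm:varEquivw2Lor}; this expresses $\dot{\LL}_t(f)$ in terms of $\psi_t$ evaluated at $G[\rho_t](f)$. Writing $\phi_t(f) \defeq \psi_t(G[\rho_t](f))$ for the transported potential and using $\totdd{G[\rho_t](f)}{f} = 1/\rho_t(G[\rho_t](f)) = \twoddf{\LL_t(f)}$ together with $M(\rho)=m(\rho)\rho$ and $\rho_t(G[\rho_t](f)) = (\twoddf{\LL_t(f)})^{-1}$, one obtains a clean relation of the form $\partial_f \dot{\LL}_t(f) = -\,m\!\left((\twoddf{\LL_t})^{-1}\right)\partial_f\phi_t(f)$ (up to constants of integration fixed by the tangent-space boundary conditions in \cref{def:genericLorenzTangentSpace}). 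Substituting this into $\int_0^1 df\,\left(m\left((\twoddf{\LL_t})^{-1}\right)\right)^{-1}\left(\partial_f\dot{\LL}_t\right)^2$ gives $\int_0^1 df\, m\!\left((\twoddf{\LL_t})^{-1}\right)\left(\partial_f\phi_t\right)^2$. Finally, changing the $W_{2,M}$ action integral $\int_\real dx\, M(\rho_t)(\partial_x\psi_t)^2$ to the cdf variable $f$ — picking up a Jacobian $dx = dG[\rho_t](f) = (1/\rho_t)\,df = \twoddf{\LL_t}\,df$, and noting $\partial_x\psi_t = (\partial_f\phi_t)/\twoddf{\LL_t}$ and $M(\rho_t) = m(\rho_t)\rho_t = m\!\left((\twoddf{\LL_t})^{-1}\right)(\twoddf{\LL_t})^{-1}$ — yields precisely the same expression $\int_0^1 df\, m\!\left((\twoddf{\LL_t})^{-1}\right)\left(\partial_f\phi_t\right)^2$. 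Hence the two action densities agree pointwise in $t$, so the infima are equal, and $d_{W_{2,M}}^2(\hat{\rho}_0,\hat{\rho}_1) = d_{\LLfrak,W_{2,M}}^2(\Lbf[\hat{\rho}_0],\Lbf[\hat{\rho}_1])$.

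The main obstacle I anticipate is the careful bookkeeping of the boundary terms and constants of integration when transporting the continuity equation: the step that produces $\partial_f\dot{\LL}_t$ from $\dot{\rho}_t$ involves two integrations (multiplying by $x$ and integrating, then an integration by parts in the cdf variable), and one must check that the boundary contributions at $f=0,1$ vanish — this is where the defining conditions $\eta(0)=\eta'(0)=\eta'(1)=0$ of $\LorTan$ and the positivity of the densities (which controls the behavior of $G[\rho_t]$ and $\twoddf{\LL_t}$ at the endpoints) are used, mirroring the vanishing-boundary-term assumptions already invoked in \cref{thm:varEquivw2Lor} and \cref{prop:frechetFunctionalChangeOfVar}. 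A secondary subtlety is that both infima are over \emph{formal} families of curves, so the claim is an equality of formal action functionals under the identification of admissible curves; once the integrands are shown equal and the curve correspondence is bijective and endpoint-preserving, the equality of infima is immediate.
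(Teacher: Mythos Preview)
Your strategy is essentially the paper's: put admissible curves in bijection via $\Lbf$, express $\partial_f\dot{\LL}_t$ in terms of the velocity potential $\psi_t$ using the continuity constraint, and then verify by change of variables that the two action integrands coincide pointwise in $t$. The paper's route to the key identity is slightly more direct than your ``multiply by $x$ and integrate'' detour through $L_t(x)$: it simply swaps $\partial_f$ and $\partial_t$, uses $\partial_f\LL_t = G[\rho_t]$ from \cref{eq:fDerivL}, and then invokes the formula \cref{eq:tDerivG} for $\partial_t G[\rho_t](f)$ to obtain immediately
\[
\partial_f\dot{\LL}_t(f)\;=\;m\bigl(\rho_t(G[\rho_t](f))\bigr)\,\bigl[\partial_x\psi_t\bigr]_{x=G[\rho_t](f)}.
\]

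Your stated intermediate formula $\partial_f\dot{\LL}_t = -\,m\,\partial_f\phi_t$ is off by a nontrivial factor, not merely a constant of integration: since $\partial_f\phi_t = (\partial_x\psi_t)\,\partial_f G[\rho_t] = (\partial_x\psi_t)\,\twoddf{\LL_t}$, the correct relation in your variables is $\partial_f\dot{\LL}_t = m\,(\twoddf{\LL_t})^{-1}\partial_f\phi_t$. Your change of variables for the $W_{2,M}$ action has a matching slip: combining $dx = \twoddf{\LL_t}\,df$, $M(\rho_t) = m\,(\twoddf{\LL_t})^{-1}$, and $(\partial_x\psi_t)^2 = (\partial_f\phi_t)^2(\twoddf{\LL_t})^{-2}$ yields $\int_0^1 df\, m\,(\twoddf{\LL_t})^{-2}(\partial_f\phi_t)^2$, not $\int_0^1 df\, m\,(\partial_f\phi_t)^2$. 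The two errors cancel, so the final equality of actions is correct, but both intermediate expressions should be corrected to $\int_0^1 df\, m\,(\twoddf{\LL_t})^{-2}(\partial_f\phi_t)^2$ before the argument is sound.
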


\begin{proof}
    Consider a time-parameterized curve $\LL_t$ in $\lorenz{\real}$ for $t\in[0,1]$. For the associated positive densities, $\rho_t = \Pbf[\LL_t]$, let $\psi_t$ satisfy \begin{equation}
        \ddt{\rho_t} = -\ddx{}\left(M(\rho_t)\ddx{\psi_t}\right)\label{eq:psiTEllipticPDEw2misom}
    \end{equation} and vanish as $x\rightarrow\pm\infty$.

    We focus on the term \begin{equation*}
        \ddf{\dot{\LL}_t(f)}
    \end{equation*} in the integrand of the $\lorenzsub{W_{2,M}}$ global metric, \cref{eq:LorW2MGlobalMetricv2}, with the intent to express it using $\psi_t.$ Assume sufficient smoothness to exchange the $t$ and $f$ derivatives so \begin{align*}
        \ddf{\dot{\LL}_t(f)}&= \ddt{}\ddf{}\LL_t\\
        &= \ddt{G[\rho_t](f)},
    \end{align*} by \cref{eq:fDerivL} in \cref{sec:reviewOfResultsLorDyn}.
    The time derivative of $G[\rho_t](f)$, also in \cref{sec:reviewOfResultsLorDyn}, is \begin{equation}
        \ddt{G[\rho_t](f)}= -\frac{1}{\rho_t(G[\rho_t](f))}\int_{-\infty}^{G[\rho_t](f)}\,dy\,\ddt{\rho_t}\nonumber,
    \end{equation} and upon inserting the expression that $\psi_t$ satisfies, given by  \cref{eq:psiTEllipticPDEw2misom}, we obtain\begin{align}
        \ddt{G[\rho_t](f)} &= \frac{1}{\rho_t(G[\rho_t](f))}M\left(\rho_t\left(G[\rho_t](f)\right)\right)\left[\ddx{}\psi_t\right]_{x=G[\rho_t](f)}\nonumber\\
        &= m(\rho_t(G[\rho_t](f)))\left[\ddx{}\psi_t\right]_{x=G[\rho_t](f)}.
    \end{align}

    Thus, for $\LL_t:[0,1]\rightarrow \lorenz{\real}$, we can re-write the infimization objective of \cref{eq:LorW2MGlobalMetricv2}, \begin{equation}
        \int_0^1\,dt\,\int_0^1\,df\,\left(m\left(\frac{1}{\twoddf{\LL_t(f)}}\right)\right)^{-1}\left(\ddf{\dot{\LL}_t(f)}\right)^2\nonumber,
    \end{equation} using the expression just derived, \begin{equation}
        \ddf{\dot{\LL}_t(f)} = m(\rho_t(G[\rho_t](f)))\left[\ddx{}\psi_t\right]_{x=G[\rho_t](f)}\nonumber.
    \end{equation}
    From \cref{sec:reviewOfResultsLorDyn}, $\rho_t(G[\rho_t](f)) = \frac{1}{\twoddf{\LL_t}}$ so we have \begin{align}
        &\int_0^1\,dt\,\int_0^1\,df\,\left(m\left(\frac{1}{\twoddf{\LL_t(f)}}\right)\right)^{-1}\left(\ddf{\dot{\LL}_t(f)}\right)^2 \nonumber\\ 
        &= \int_0^1\,dt\,\int_0^1\,df\,m\left(\rho_t(G[\rho_t](f))\right)\left(\left[\ddx{}\psi_t\right]_{x=G[\rho_t](f)}\right)^2 \label{eq:w2mIsomProofStep1}\\
        &= \int_0^1\,dt\,\int_\real\,dx\,\rho_t(x)m\left(\rho_t(x)\right)\left(\ddx{\psi_t}\right)^2\label{eq:w2mIsomProofStep2}\\
        &= \int_0^1\,dt\,\int_\real\,dx\,M\left(\rho_t(x)\right)\left(\ddx{\psi_t}\right)^2\label{eq:w2mIsomProofStep3},
    \end{align} where the step from \cref{eq:w2mIsomProofStep1} to \cref{eq:w2mIsomProofStep2} comes from transforming the integral over $f\in[0,1]$ to an integral over $x\in\real$, using the integral transformation rules.

    Infimizing \cref{eq:w2mIsomProofStep3} over all such smooth curves $\rho_t$ with the prescribed terminal points shows the equality with \cref{eq:w2mBenamouBrenier}.
    
    \end{proof}

\section{Adapted Wasserstein gradient flows}\label{sec:cdSpaceTheory}

The $\cdspace$ spaces studied in \cite{CDSpaces2024} are adaptations of the $W_{2,M}$ metric geometry imposed on the space of probability measures. These spaces found success in expressing some McKean-Vlasov evolution equations as gradient flows of meaningful functionals.

In particular, the definition of the $\cdspace$ space metric structures was motivated by a desire to encapsulate \textit{a priori} known conserved quantities, in addition to the conservation of total probability mass. In so doing, a collection of metric tensors was found for the space of positive probability measures over $\real$ with fixed first moment.

Let $D[x,\rho]: \real \times \pplusac{\real}\rightarrow\posreal$.

\begin{definition}\label{def:w22Dtensor}
    At each $\rho\in\pplusac{\real}$, the metric tensor $\langle\,,\,\rangle_{\rho,\cdspace}$ is defined for $h_1,h_2 \in T_\rho \cdspace$ as \begin{equation}\label{eq:w22Dtensor}
\langle h_1,h_2\rangle_{\rho,\cdspace} = \int_\Omega\,dx\,D[x,\rho]\twoddx{\psi_1}\twoddx{\psi_2}
\text{ where for } i =1,2 \, \begin{cases}
\twoddx{}\left(D[x,\rho]\twoddx{\psi_i}\right)=h_i \\
\psi_i(x) \rightarrow 0 \text{ as } x\rightarrow\pm\infty.
\end{cases}\nonumber\end{equation}
\end{definition}

The admissible $h:\real\rightarrow\real$ in the definition of $\langle \, , \,\rangle_{\rho,\cdspace}$ satisfy \[\int_\real\,dx\,h(x) = 0 = \int_\real\,dx\,xh(x).\]

\begin{proposition}[The first-order calculus for $\cdspace(\real)$, \cite{CDSpaces2024}]\label{prop:cdGradFlow}
    For $H:\ptwoac{\real} \rightarrow\real$, \begin{equation}\label{eq:w22dgrad1d}\grad{\cdspace,\rho}{H[\rho]} = \twoddx{}\left(D[x,\rho]\twoddx{}\frechet{H}{\rho}\right).\nonumber\end{equation}
\end{proposition}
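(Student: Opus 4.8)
The plan is to follow the same variational template used for \cref{lem:firstOrderCalcW2Lor,lem:firstOrderCalcW2MLor}, now with the $\cdspace$ metric tensor of \cref{def:w22Dtensor}. Fix $\rho\in\ptwoac{\real}$ and a smooth curve $\rho_t$ with $\rho_0=\rho$; write $h\defeq\dot{\rho}_0$, which is an admissible $\cdspace$ tangent vector, so $\int_\real\,dx\,h(x) = 0 = \int_\real\,dx\,xh(x)$. The rate of change of $H$ along the curve admits the two standard representations
\begin{equation*}
\left(\frechet{H}{\rho},h\right) = \totdd{}{t}\bigg|_{t=0}H[\rho_t] = \left\langle \grad{\cdspace,\rho}{H[\rho]},\, h\right\rangle_{\rho,\cdspace}.
\end{equation*}

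First I would unpack the right-hand side. Let $\psi$ be the potential of $\grad{\cdspace,\rho}{H[\rho]}$, i.e.\ the solution of $\twoddx{}(D[x,\rho]\twoddx{\psi}) = \grad{\cdspace,\rho}{H[\rho]}$ with $\psi\to 0$ at $\pm\infty$, and let $\phi$ be the potential of $h$. By \cref{def:w22Dtensor}, $\langle \grad{\cdspace,\rho}{H[\rho]}, h\rangle_{\rho,\cdspace} = \int_\real\,dx\,D[x,\rho]\twoddx{\psi}\twoddx{\phi}$. Integrating by parts twice so as to transfer both $x$-derivatives off the factor $\twoddx{\psi}$ onto $D[x,\rho]\twoddx{\phi}$, and reconstituting $\twoddx{}(D[x,\rho]\twoddx{\phi}) = h$, this becomes $\int_\real\,dx\,\psi h$, provided the boundary contributions at $\pm\infty$ vanish. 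Comparing with the $L^2$ representation of $\dot{H}$ yields $\int_\real\,dx\,(\psi - \frechet{H}{\rho})\,h = 0$ for every admissible $h$.

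Since the admissible perturbations are exactly the $L^2(\real)$-orthogonal complement of $\mathrm{span}\{1,x\}$ (the two quantities, total mass and first moment, held fixed by the $\cdspace$ geometry), the displayed identity forces $\psi = \frechet{H}{\rho} + a + bx$ for some constants $a,b$. Applying the operator $\twoddx{}(D[x,\rho]\twoddx{\,\cdot\,})$ and using $\twoddx{(a+bx)} = 0$ then gives
\begin{equation*}
\grad{\cdspace,\rho}{H[\rho]} = \twoddx{}\left(D[x,\rho]\twoddx{\psi}\right) = \twoddx{}\left(D[x,\rho]\twoddx{}\frechet{H}{\rho}\right),
\end{equation*}
which is the assertion.

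I expect the main obstacle to be purely analytic: justifying the double integration by parts, i.e.\ that $\psi$, $\phi$ and the weighted fluxes $D[x,\rho]\twoddx{\psi}$, $D[x,\rho]\twoddx{\phi}$ (along with their first derivatives) decay at $\pm\infty$ fast enough to annihilate every boundary term. This is precisely the regularity and decay requirement implicit in the definition of the $\cdspace$ tangent space, and under it the remaining algebra, including the harmless affine indeterminacy of $\psi$ that is killed by the second derivative, goes through routinely.
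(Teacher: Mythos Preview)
The paper does not actually prove \cref{prop:cdGradFlow}; it is quoted from \cite{CDSpaces2024} and stated without argument. Your proposal is a correct proof and is exactly in the spirit of the paper's own proofs of the analogous first-order calculus results (\cref{lem:firstOrderCalcW2Lor,lem:firstOrderCalcW2MLor}): write $\dot H$ both as an $L^2$ pairing with the Fr\'echet derivative and as the $\cdspace$ pairing with the gradient, integrate by parts twice to convert $\int_\real D\,\psi_{xx}\phi_{xx}$ into $\int_\real \psi\,(D\phi_{xx})_{xx}=\int_\real \psi\,h$, and read off the Onsager operator. Your observation that the admissible $h$ are the $L^2$-orthogonal complement of $\mathrm{span}\{1,x\}$, so that $\psi$ is determined only up to an affine function which is annihilated by $\twoddx{}$, is a nice touch that the paper's parallel proofs leave implicit.
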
 

A class of nonlinear partial integro-differential equations arising from the application of mean-field and kinetic theory to idealized economic models are expressible as the gradient flows of a common measure of economic inequality using this family of metric tensors.

\subsection{The case of \texorpdfstring{$\cdspacearg{\rho}$}{C p}}
We start with the case $D[x,\rho] = \rho$, which is analogous to that of linear mobility, $M(\rho) = \rho$, for the Wasserstein spaces.

Let $\LorTan_0 = \{\eta\in\LorTan\,:\,\eta(1) = 0\}$, where $\LorTan$ is defined in \cref{def:genericLorenzTangentSpace}.

\begin{definition}[Metric tensor]
    For $\eta_1,\eta_2 \in \LorTan_0$, let \begin{equation}
        \left \langle \eta_1,\eta_2 \right \rangle_{\LLfrak,\cdspacearg{\rho},\LL} \defeq \int_0^1\,df\,\left(\tottwodd{\LL(f)}{f}\right)^2\eta_1(f)\eta_2(f).
    \end{equation} The tuple $\{\LorTan_0, \langle\,,\,\rangle_{\LLfrak,\cdspacearg{\rho},\LL}\}$ is the tangent space $T_\LL \lorenzsub{\cdspacearg{\rho}}$.
\end{definition}

\begin{lemma}[First-order functional calculus]
    Let $\widetilde{F}:\lorenz{\real}\rightarrow\real$ then \begin{equation}
        \grad{{\LLfrak,\cdspacearg{\rho},\LL}}{\widetilde{F}} = \left(\tottwodd{\LL}{f}\right)^{-2}\frechet{\widetilde{F}}{\LL}
    \end{equation}
\end{lemma}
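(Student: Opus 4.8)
The plan is to mimic the proof of \cref{lem:firstOrderCalcW2Lor} (and its nonlinear-mobility analogue \cref{lem:firstOrderCalcW2MLor}), exploiting that the $\cdspacearg{\rho}$ metric tensor is not a Sobolev-type (derivative-weighted) inner product but a plain multiplication operator: $\langle \eta_1,\eta_2\rangle_{\LLfrak,\cdspacearg{\rho},\LL} = \int_0^1 df\,(\LL_{ff})^2\,\eta_1\eta_2$. Because no integration by parts is needed to move derivatives off the tangent vectors, the Onsager operator should come out as a pointwise multiplier rather than a differential operator, which matches the entry $(\LL_{ff})^{-2}\times[\cdot]$ in \cref{tab:onsagerOpComparison}.

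Concretely, I would take a smooth curve $\LL_t:[-T,T]\to\lorenz{\real}$ through $\LL_0=\LL$ and write the rate of change of $\widetilde F\circ\LL_t$ at $t=0$ in two ways, exactly as in \cref{eq:rocW2Linear}:
\begin{equation*}
\left(\frechet{\widetilde F}{\LL}[\LL_0],\,\ddt{\LL_t}\Big|_{t=0}\right) = \dd{(\widetilde F\circ\LL_t)}{t}\Big|_{t=0} = \left\langle \grad{\LLfrak,\cdspacearg{\rho},\LL_0}{\widetilde F},\,\ddt{\LL_t}\Big|_{t=0}\right\rangle_{\LLfrak,\cdspacearg{\rho},\LL_0}.
\end{equation*}
Expanding the right-hand inner product with the definition of $\langle\,,\,\rangle_{\LLfrak,\cdspacearg{\rho},\LL_0}$ gives $\int_0^1 df\,(\LL_{0,ff})^2\,\grad{\LLfrak,\cdspacearg{\rho},\LL_0}{\widetilde F}\,(\partial_t\LL_t|_{t=0})$, while the left-hand side is $\int_0^1 df\,\frechet{\widetilde F}{\LL}[\LL_0]\,(\partial_t\LL_t|_{t=0})$. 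Since the time derivative $\partial_t\LL_t|_{t=0}$ ranges over all of $\LorTan_0$ as $\LL_t$ varies, matching the two integrands pointwise yields $(\LL_{ff})^2\,\grad{\LLfrak,\cdspacearg{\rho},\LL_0}{\widetilde F} = \frechet{\widetilde F}{\LL}$, i.e. $\grad{\LLfrak,\cdspacearg{\rho},\LL}{\widetilde F} = (\LL_{ff})^{-2}\,\frechet{\widetilde F}{\LL}$, as claimed.

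The main point requiring care — the only real obstacle — is the density/arbitrariness argument needed to pass from the equality of two integrals against every admissible perturbation to the pointwise identity of the integrands: one must check that the tangent vectors $\partial_t\LL_t|_{t=0}$ sweep out a rich enough subset of $\LorTan_0$ (dense in $L^2(0,1)$, say) for the conclusion to follow, and that $\frechet{\widetilde F}{\LL}$ and $(\LL_{ff})^{-2}$ are regular enough (with $\LL_{ff}>0$ on $(0,1)$, which holds since $\rho = \Pbf[\LL]>0$) for the pointwise division to make sense — this is the same standing smoothness/positivity hypothesis already used implicitly throughout \cref{sec:WassersteinGradFlowCompat}. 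Everything else is a direct transcription of the $\lorenzsub{W_2}$ computation with the integration-by-parts step deleted, so I would keep the proof to a few lines and refer back to \cref{lem:firstOrderCalcW2Lor} for the structure.
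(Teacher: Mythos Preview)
Your proposal is correct and is precisely the approach the paper has in mind: the paper's own proof reads in its entirety ``This is a straightforward application of the techniques used above,'' referring to the computation in \cref{lem:firstOrderCalcW2Lor}, and your write-up is exactly that computation with the integration-by-parts step omitted because the metric tensor is a pointwise multiplier.
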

\begin{proof}
    This is a straightforward application of the techniques used above.
\end{proof}

\begin{proposition}\label{prop:varEquivCrhoLor}
    Consider $F:\pplusac{\real}\rightarrow\real$ and $\widetilde{F}:\lorenz{\real}\rightarrow\real$ such that $\left(\widetilde{F}\circ \Lbf \right)[\rho] = F[\rho]$ for $\rho\in\pplusac{\real}.$ The $(f,t,\LL)$-transformed dynamics of \begin{equation*}
        \ddt{\rho_t} = \grad{\cdspacearg{\rho},\rho_t}{F[\rho_t]}
    \end{equation*} are equivalent to \begin{equation*}
        \ddt{\LL_t}=\grad{\LLfrak,\cdspacearg{\rho},\LL_t}{\widetilde{F}[\LL_t]}.
    \end{equation*}
\end{proposition}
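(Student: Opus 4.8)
The plan is to follow the template of \cref{thm:varEquivw2Lor}, exploiting the fact that the $\cdspacearg{\rho}$ gradient flow is already available in closed form: by \cref{prop:cdGradFlow} with $D[x,\rho]=\rho$, the dynamics in $\pplusac{\real}$ are
\[
\ddt{\rho_t(x)} = \twoddx{}\left(\rho_t(x)\,\twoddx{}\frechet{F}{\rho_t}(x)\right),
\]
so it suffices to show that carrying this through the $(x,t,\rho)\mapsto(f,t,\LL)$ transformation produces
\[
\ddt{\LL_t(f)} = \left(\twoddf{\LL_t(f)}\right)^{-2}\frechet{\widetilde{F}}{\LL_t}(f),
\]
which is exactly $\grad{\LLfrak,\cdspacearg{\rho},\LL_t}{\widetilde{F}[\LL_t]}$ by the first-order functional calculus of $\lorenzsub{\cdspacearg{\rho}}$ established just above.

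Rather than passing through $L_t(x)=\int_{-\infty}^x dy\, y\rho_t(y)$ as in the proof of \cref{thm:varEquivw2Lor}, I would differentiate $\LL_t$ directly. Applying \cref{lem:frechetLorenzMap} at each time $t$ with the admissible perturbation $h=\ddt{\rho_t}$ gives
\[
\ddt{\LL_t(f)} = \int_{-\infty}^{G[\rho_t](f)}\,dy\,y\,\ddt{\rho_t(y)} - G[\rho_t](f)\int_{-\infty}^{G[\rho_t](f)}\,dy\,\ddt{\rho_t(y)}.
\]
Substituting $\ddt{\rho_t(y)}=\twoddy{}\!\left(\rho_t(y)\,\twoddy{}\frechet{F}{\rho_t}(y)\right)$ and performing the resulting integrations by parts, discarding all boundary contributions at $\pm\infty$, the two terms proportional to $G[\rho_t](f)$ cancel and one is left with
\[
\ddt{\LL_t(f)} = -\left[\rho_t\,\twoddx{}\frechet{F}{\rho_t}\right]_{x=G[\rho_t](f)}.
\]
Differentiating the $x$-form of \cref{prop:frechetFunctionalChangeOfVar} twice yields $\twoddx{}\frechet{F}{\rho}(x) = -\rho(x)\,\frechet{\widetilde{F}}{\LL}(C[\rho](x))$; since $C[\rho_t](G[\rho_t](f))=f$, this turns the previous display into $\ddt{\LL_t(f)} = \rho_t(G[\rho_t](f))^{2}\,\frechet{\widetilde{F}}{\LL_t}(f)$. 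Finally, the identity $\rho_t(G[\rho_t](f)) = \left(\twoddf{\LL_t(f)}\right)^{-1}$ from \cref{sec:reviewOfResultsLorDyn} gives $\ddt{\LL_t(f)} = \left(\twoddf{\LL_t(f)}\right)^{-2}\frechet{\widetilde{F}}{\LL_t}(f) = \grad{\LLfrak,\cdspacearg{\rho},\LL_t}{\widetilde{F}[\LL_t]}$, as required.

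The algebra is short, so the only real care needed is bookkeeping. One must justify discarding each boundary term at $x=\pm\infty$ produced by the integration by parts — the usual decay conventions on $\rho_t$, $\twoddx{}\frechet{F}{\rho_t}$ and their first moments, exactly as in \cref{thm:varEquivw2Lor}, suffice — and one should check that $\ddt{\LL_t}$ genuinely lies in $\LorTan_0$: the vanishing at $f=0$ of $\ddt{\LL_t}$ and of its $f$-derivative, together with the vanishing of that $f$-derivative at $f=1$, hold for any derivative of a curve of Lorenz curves as in \cref{ssec:LorTanSpace}, while the remaining requirement $\ddt{\LL_t}(1)=0$ is precisely conservation of the first moment $\LL_t(1)=\int_\real dx\, x\rho_t(x)$ along the flow, which is automatic since $\int_\real dx\, x\,\twoddx{}(\cdots)=0$. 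As an even shorter alternative, one may read the PDE as the McKean--Vlasov equation \cref{eq:MVFPE} with zero drift and diffusion coefficient $D[x,t,\rho]=\twoddx{}\frechet{F}{\rho}(x)$, apply the Lorenz transform \cref{eq:LorDynGen}, and observe via \cref{prop:frechetFunctionalChangeOfVar} that then $\widetilde{\Sigma}\equiv 0$ and $\widetilde{D}[f,t,\LL]=-\left(\twoddf{\LL(f)}\right)^{-1}\frechet{\widetilde{F}}{\LL}(f)$, so that $-\widetilde{D}/\twoddf{\LL}$ is manifestly the asserted $\LL$-gradient.
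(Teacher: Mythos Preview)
Your proof is correct and morally the same as the paper's: both push the $\cdspacearg{\rho}$ gradient flow through the $(x,t,\rho)\mapsto(f,t,\LL)$ change of variables, invoke \cref{prop:frechetFunctionalChangeOfVar}, and identify the result with the $\lorenzsub{\cdspacearg{\rho}}$ first-order calculus. The differences are organizational. The paper does not argue \cref{prop:varEquivCrhoLor} separately; it defers to the general \cref{thm:varEquivCDLor} and specializes $D[x,\rho]=\rho$. That proof passes through the intermediate object $L_t(x)=\int_{-\infty}^x dy\, y\rho_t(y)$, transforms the $x$-integral to the cdf variable, and only at the end substitutes $x=G[\rho_t](f)$. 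You instead differentiate $\LL_t(f)$ directly via \cref{lem:frechetLorenzMap}, which already encodes the chain-rule correction for the time dependence of $G[\rho_t]$; the cancellation of the two $G[\rho_t](f)$ boundary terms you observe is exactly what, in the paper's route, makes the naive substitution $x\mapsto G[\rho_t](f)$ legitimate. Your path is a touch more transparent on this point. The closing shortcut through \cref{eq:LorDynGen} is not used in the paper and is a nice alternative, with the caveat that the ``diffusion coefficient'' $\partial_{xx}\frac{\delta F}{\delta\rho}$ need not be nonnegative, so the identification with \cref{eq:MVFPE} is purely formal---which is all that is needed here.
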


We delay the proof until the next section in which we prove a more general case, encompassing \cref{prop:varEquivCrhoLor}.

\subsection{Generalizing to \texorpdfstring{$\cdspace$}{C D}}
Let $D[x,\rho] = d[x,\rho]\rho$, where $d[x,\rho]>0$. This notation is meant to mirror the nonlinear mobility case of Wasserstein space. Our convention in what follows is that, using the relations of \cref{sec:reviewOfResultsLorDyn}, \begin{equation*}
    \widetilde{d}[f,\LL] = d\left[\totdd{\LL}{f},1\bigg/\tottwodd{\LL}{f}\right],
\end{equation*} which equals $d[x,\rho]$ when evaluated at $f=C[\rho](x).$ This convention extends to $D[x,\rho]$ as well. Therefore we also have that $\widetilde{D}[f,\LL]=\widetilde{d}[f,\LL]\bigg/\tottwodd{\LL(f)}{f}$.

\begin{definition}[Metric tensor]\label{def:tensorLorCD}
    Given $\eta_1,\eta_2 \in \LorTan_0$, let \begin{equation}
        \left \langle \eta_1,\eta_2 \right \rangle_{\LLfrak,\cdspace,\LL} \defeq \int_0^1\,df\,\frac{1}{\widetilde{d}[f,\LL]}\left(\tottwodd{\LL(f)}{f}\right)^2\eta_1(f)\eta_2(f).
    \end{equation}
\end{definition}

\begin{lemma}[First-order functional calculus]\label{lem:firstOrderCalcCDLor}
    Let $\widetilde{F}:\lorenz{\real}\rightarrow\real$ then \begin{align}
        \grad{{\LLfrak,\cdspace,\LL}}{\widetilde{F}} &= \widetilde{d}[f,\LL]\left(\tottwodd{\LL}{f}\right)^{-2}\frechet{\widetilde{F}}{\LL}\\
        &=\widetilde{D}[f,\LL]\left(\tottwodd{\LL}{f}\right)^{-1}\frechet{\widetilde{F}}{\LL}
    \end{align}
\end{lemma}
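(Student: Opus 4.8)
The plan is to mirror the proof of \Cref{lem:firstOrderCalcW2MLor}, replacing the Sobolev-type computation with the simpler pointwise structure coming from \Cref{def:tensorLorCD}. Let $T>0$ and $\LL_t:[-T,T]\to\lorenz{\real}$ be a smooth curve through $\LL_0 = \LL$. The rate of change $\frac{d}{dt}\big|_{t=0}(\widetilde{F}\circ\LL_t)$ can be written in two ways: once via the Fr\'echet derivative, as $\left(\frechet{\widetilde{F}}{\LL}[\LL_0],\,\dot{\LL}_0\right)$, and once via the metric tensor as $\left\langle \grad{\LLfrak,\cdspace,\LL_0}{\widetilde{F}},\,\dot{\LL}_0\right\rangle_{\LLfrak,\cdspace,\LL_0}$. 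Since the $\cdspace$-type tensor involves no derivatives of $\eta_1,\eta_2$ (unlike the $W_2$ cases), there is no integration by parts to perform.

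First I would expand the right-hand side using \Cref{def:tensorLorCD}:
\begin{equation*}
    \left\langle \grad{\LLfrak,\cdspace,\LL_0}{\widetilde{F}},\,\dot{\LL}_0\right\rangle_{\LLfrak,\cdspace,\LL_0} = \int_0^1\,df\,\frac{1}{\widetilde{d}[f,\LL_0]}\left(\tottwodd{\LL_0(f)}{f}\right)^2\left(\grad{\LLfrak,\cdspace,\LL_0}{\widetilde{F}}\right)(f)\,\dot{\LL}_0(f).
\end{equation*}
Setting this equal to $\int_0^1\,df\,\frechet{\widetilde{F}}{\LL}[\LL_0](f)\,\dot{\LL}_0(f)$ and using that $\dot{\LL}_0$ ranges over a suitably rich subset of $\LorTan_0$ as the curve varies, I would conclude that
\begin{equation*}
    \frac{1}{\widetilde{d}[f,\LL_0]}\left(\tottwodd{\LL_0}{f}\right)^2\left(\grad{\LLfrak,\cdspace,\LL_0}{\widetilde{F}}\right) = \frechet{\widetilde{F}}{\LL}[\LL_0]
\end{equation*}
pointwise, which upon solving for the gradient yields $\grad{\LLfrak,\cdspace,\LL}{\widetilde{F}} = \widetilde{d}[f,\LL]\left(\tottwodd{\LL}{f}\right)^{-2}\frechet{\widetilde{F}}{\LL}$, the first displayed identity. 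The second displayed identity then follows immediately from the convention recorded just before \Cref{def:tensorLorCD}, namely $\widetilde{D}[f,\LL] = \widetilde{d}[f,\LL]\big/\tottwodd{\LL(f)}{f}$, so that $\widetilde{d}[f,\LL]\left(\tottwodd{\LL}{f}\right)^{-2} = \widetilde{D}[f,\LL]\left(\tottwodd{\LL}{f}\right)^{-1}$.

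The only subtle point — and the one I'd expect to be the main obstacle in making this fully rigorous — is the passage from the equality of the two integral pairings to the pointwise identity for the gradient. One must check that the tangent vectors $\dot{\LL}_0$ realized by smooth curves through $\LL_0$ in $\lorenz{\real}$ are dense enough in $\LorTan_0$ (with respect to a topology adapted to the weight $\widetilde{d}^{-1}(\tottwodd{\LL}{f})^2$) for the fundamental lemma of the calculus of variations to apply. Since the paper works at the formal level throughout — treating these as formal infinite-dimensional Riemannian manifolds — it is consistent with the surrounding development to take this density for granted, exactly as was implicitly done in \Cref{lem:firstOrderCalcW2Lor} and \Cref{lem:firstOrderCalcW2MLor}; hence the one-line proof "This is a straightforward application of the techniques used above" is the natural presentation here as well, with the sketch above supplying the content.
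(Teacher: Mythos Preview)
Your proposal is correct and matches the paper's approach: the paper gives no explicit proof for this lemma, but the immediately preceding $\cdspacearg{\rho}$ case is proved with the one-liner ``This is a straightforward application of the techniques used above,'' and your sketch is exactly that application, carried out with the weight $\widetilde{d}^{-1}(\LL_{ff})^2$ in place of $(\LL_{ff})^2$. Your observation that no integration by parts is needed (since the tensor in \Cref{def:tensorLorCD} is a pure weighted $L^2$ pairing) and your derivation of the second identity from $\widetilde{D}=\widetilde{d}/\LL_{ff}$ are both spot on.
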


\begin{theorem}
    \label{thm:varEquivCDLor}
    Consider $F:\pplusac{\real}\rightarrow\real$ and $\widetilde{F}:\lorenz{\real}\rightarrow\real$ such that $\left(\widetilde{F}\circ \Lbf \right)[\rho] = F[\rho]$ for $\rho\in\pplusac{\real}.$ The $(f,t,\LL)$-transformed dynamics of \begin{equation*}
        \ddt{\rho_t} = \grad{\cdspace,\rho_t}{F[\rho_t]}
    \end{equation*} are equivalent to \begin{equation*}
        \ddt{\LL_t}=\grad{\LLfrak,\cdspace,\LL_t}{\widetilde{F}[\LL_t]}.
    \end{equation*}
\end{theorem}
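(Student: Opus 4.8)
The plan is to mirror the structure of the proof of \Cref{thm:varEquivw2Lor}, adapting each step to the $\cdspace$ setting. First I would write out the $\cdspace$ gradient flow in $(x,t,\rho)$ variables using \Cref{prop:cdGradFlow}:
\begin{equation*}
    \ddt{\rho_t(x)} = \twoddx{}\left(D[x,\rho_t]\twoddx{}\frechet{F}{\rho_t}(x)\right).
\end{equation*}
To pass to Lorenz variables, multiply by $x$ and integrate from $-\infty$ to $x$ to obtain an equation for $L_t(x) = \int_{-\infty}^x\,dy\,y\rho_t(y)$; one integration by parts in $y$ (assuming the boundary terms at $\pm\infty$ vanish, as elsewhere in the paper) moves one $x$-derivative off of $D\,\partial_{xx}\frechet{F}{\rho_t}$ and picks up a factor of $1$ from $\partial_y y$, leaving $\ddt{L_t(x)} = \int_{-\infty}^x dy\, \partial_y[\cdots] $-type structure that telescopes. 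Then I would change to the cdf integration variable $g = C[\rho_t](y)$, using the transformation rules of \Cref{sec:reviewOfResultsLorDyn} — in particular $\partial_x \leftrightarrow \rho_t\,\partial_g$, $dx \leftrightarrow (1/\rho_t)\,dg$, and $\rho_t(G[\rho_t](f)) = 1/\twoddf{\LL_t(f)}$ — and insert the change-of-variables identity for $\frechet{F}{\rho_t}$ versus $\frechet{\widetilde{F}}{\LL_t}$ from \Cref{prop:frechetFunctionalChangeOfVar}, namely $\frechet{F}{\rho}(G[\rho](f)) = -\int_0^f dg\,\tfrac{1}{\rho(G[\rho](g))}\int_0^g dl\,\frechet{\widetilde{F}}{\LL}(l)$.

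The key algebraic fact is that two $x$-derivatives on $\frechet{F}{\rho}$ exactly undo the two integrations in that identity: $\twoddx{}\frechet{F}{\rho}(x)$, after converting to the $g$-variable, reduces to $-\rho_t\,\partial_g\!\big(\rho_t \partial_g (\text{double integral})\big)$, and each $\rho_t\partial_g$ cancels one layer, producing $-\frac{1}{\rho_t}\frechet{\widetilde{F}}{\LL_t}$ up to the Jacobian bookkeeping. Combining this with $D[x,\rho_t] = \widetilde{D}[f,\LL_t]$ and $\rho_t = 1/\twoddf{\LL_t}$, and finally replacing $x$ by $G[\rho_t](f)$ so that $L_t(x)$ becomes $\LL_t(f)$, I expect the right-hand side to collapse to
\begin{equation*}
    \ddt{\LL_t(f)} = \widetilde{D}[f,\LL_t]\left(\tottwodd{\LL_t(f)}{f}\right)^{-1}\frechet{\widetilde{F}}{\LL_t},
\end{equation*}
which is exactly $\grad{\LLfrak,\cdspace,\LL_t}{\widetilde{F}[\LL_t]}$ by \Cref{lem:firstOrderCalcCDLor}. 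Since $D[x,\rho]=d[x,\rho]\rho$ is the general form, the special case $D[x,\rho]=\rho$ recovers \Cref{prop:varEquivCrhoLor}, discharging the deferred proof there.

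The main obstacle, I expect, is the careful bookkeeping of the nested integrals and the Jacobian factors when converting $\twoddx{}\frechet{F}{\rho}$ into the $f$-variable: one must track how the two outer $x$-derivatives in the $\cdspace$ Onsager operator interact with the double integral in \Cref{prop:frechetFunctionalChangeOfVar} and with the diffusion coefficient $D$, and verify that the chain-rule factors of $\rho_t = 1/\twoddf{\LL_t}$ land so as to reproduce precisely the $(\twoddf{\LL_t})^{-1}$ weighting in $\widetilde{D}[f,\LL_t](\twoddf{\LL_t})^{-1}$ rather than some other power. A secondary point to check is that the boundary terms from the integrations by parts vanish — here one relies, as in the $W_2$ case, on the decay $\psi_i \to 0$ at $\pm\infty$ built into \Cref{def:w22Dtensor} together with the tangent-space constraints $\eta(0)=\eta(1)=0$, $\eta'(0)=\eta'(1)=0$ defining $\LorTan_0$ — so that the resulting $\LL_t$-equation genuinely stays in the correct space. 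Once these two checks are in place, the identification with the first-order calculus of \Cref{lem:firstOrderCalcCDLor} is immediate.
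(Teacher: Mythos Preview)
Your proposal is correct and follows essentially the same route as the paper's proof: start from the $\cdspace$ gradient flow via \Cref{prop:cdGradFlow}, multiply by $x$ and integrate to pass to $L_t(x)$, convert to the cdf variable, insert \Cref{prop:frechetFunctionalChangeOfVar} so that the two $x$-derivatives on $\frechet{F}{\rho}$ undo the two integrations, perform one integration by parts (boundary terms assumed to vanish) so that $\partial_g G[\rho_t](g)=1/\rho_t$ cancels a factor of $\rho_t$ and the remaining integral telescopes to $D\rho\,\frechet{\widetilde{F}}{\LL}$, and finally substitute $x=G[\rho_t](f)$ to identify the result with \Cref{lem:firstOrderCalcCDLor}. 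The only cosmetic difference is ordering: you integrate by parts in $y$ before changing to the $g$-variable, whereas the paper changes variables first and integrates by parts in $g$; the computations are otherwise identical.
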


\begin{proof}
    Starting with a $\cdspace$ gradient flow of $F$, given by \cref{prop:cdGradFlow}, \begin{equation*}
        \ddt{\rho_t} = \twoddx{}\left[D[x,\rho_t]\twoddx{}\frechet{F}{\rho}(x)\right],
    \end{equation*} multiply by the spatial variable and integrate to obtain \begin{equation}\label{eq:cdGradFlowLtx}
        \ddt{L_t(x)} = \int_{-\infty}^x\,dy\,y\twoddy{}\left[D[y,\rho_t]\twoddy{}\frechet{F}{\rho}(y)\right],
    \end{equation} where $L_t(x) = \int_{-\infty}^x\,dy\,y\rho_t(y).$

    Transforming the right hand side of \cref{eq:cdGradFlowLtx} into an integration over the cdf variable $g$ yields \begin{equation}\label{eq:cdGradFlowLtxRHSv1}
        \int_0^{C[\rho_t](x)}\,dg\,G[\rho_t](g)\ddg{}\left[\rho\ddg{}D\rho\ddg{}\rho\ddg{}\left(\frechet{F}{\rho}\circ G[\rho_t]\right)(g)\right],
        \end{equation} where the arguments of $\rho$ and $D$ are in the transformed variables but are omitted for clarity.

        Inserting the relationship between $\frechet{F}{\rho}$ and $\frechet{\widetilde{F}}{\LL}$ as derived in \cref{prop:frechetFunctionalChangeOfVar} and simplifying transforms \cref{eq:cdGradFlowLtxRHSv1} into the simpler\begin{equation} \label{eq:cdGradFlowLtxRHSv2}
            -\int_0^{C[\rho_t](x)}\,dg\,G[\rho_t](g)\ddg{}\left[\rho\ddg{}D\rho\frechet{\widetilde{F}}{\LL}\right].
        \end{equation}
        Assuming that the boundary terms vanish from integration by parts, \cref{eq:cdGradFlowLtxRHSv2} becomes just \begin{equation*}
            D\left[x,\rho_t\right]\rho(x)\left(\frechet{\widetilde{F}}{\LL}\circ C[\rho_t]\right)(x),
        \end{equation*} since $\ddg{}G[\rho_t](g)$ and $\rho_t$ cancel and the fundamental theorem of calculus may then be invoked.

        Therefore we have shown that \cref{eq:cdGradFlowLtx} is equivalent to \begin{equation} \label{eq:cdGradFlowLtxv2}
            \ddt{L_t(x)} = D\left[x,\rho_t\right]\rho(x)\left(\frechet{\widetilde{F}}{\LL}\circ C[\rho_t]\right)(x).
        \end{equation}

        Finally, replacing $x$ in \cref{eq:cdGradFlowLtxv2} with $G[\rho_t](f)$ gives \begin{equation*}
            \ddt{\LL_t(f)} = \widetilde{D}[f,\LL_t]\left(\twoddf{\LL_t}\right)^{-1}\frechet{\widetilde{F}}{\LL}(f),
        \end{equation*} since $\LL_t(f) := (L_t\circ G[\rho_t])(f)$, $\widetilde{D}$ is the transformed copy of $D$ in the $(f,t,\LL)$ variables, and $\rho(G[\rho_t](f)) = \left(\twoddf{\LL_t}\right)^{-1}$. This is precisely the $(\LLfrak, \cdspace)$ gradient flow of $\widetilde{F}:\lorenz{\posreal}\rightarrow\real$, \begin{equation*}
            \grad{\LLfrak,\cdspace,\LL_t}{\widetilde{F}[\LL_t]},
        \end{equation*} identified in \cref{lem:firstOrderCalcCDLor}.
\end{proof}

\subsection{The connection to econophysics}

An example of applying \cref{thm:varEquivCDLor} may be found in conjunction with the work in \cite{CDSpaces2024}. Let $D[x,\rho]$ be the time-infinitesimal expected collisional variance for an idealized economic agent of wealth $x$ interacting with the distribution, $\rho_t$, under the diffusion approximation to an unbiased, wealth conserving, positivity preserving kinetic asset exchange model. Then the distribution of wealth of such a mean-field model evolves as a curve of steepest ascent of the (scaled) Gini coefficient, $\mathscr{G},$ of economic inequality\footnote{To be precise, a linearly scaled version of the classical Gini coefficient restricted to distributions over $\posreal$ with unit first moment.}, \begin{equation*}
    \ddt{\rho_t} = +\grad{\cdspace,\rho_t}{\mathscr{G}[\rho_t]} = \twoddx{}\left[D[x,\rho_t]\rho_t(x)\right].
\end{equation*}

\Cref{thm:varEquivCDLor} tells us that the associated Lorenz variable dynamics, \begin{equation*}
    \ddt{\LL_t} = -\widetilde{D}[f,\LL_t]\bigg/\twoddf{\LL_t},
\end{equation*} are in fact given by \begin{equation*}
    \ddt{\LL_t} = +\grad{\LLfrak,\cdspace,\LL_t}{\widetilde{\mathscr{G}}[\LL_t]}.
\end{equation*}

Pleasantly, when the Gini coefficient is defined over the space of Lorenz curves, $\widetilde{\mathscr{G}},$ it has a rather simple geometric interpretation in terms of the area contained by the Lorenz curve. Whereas when the Gini coefficient is defined over $\{\rho\in\pplusac{\posreal}\,:\,\int_{\posreal}\,dx\,x\rho(x)=1\},$ the functional is proportional to an expected absolute difference of i.i.d. copies of the distribution and is less immediately interpretable as an economic inequality metric. In this way, the variational formulation in the Lorenz dynamics is somewhat more intuitive.

\Cref{thm:varEquivCDLor} establishes that the variational principle of maximally increasing economic inequality for these dynamics, previously only known in the $(x,t,\rho)$ variables from \cite{CDSpaces2024}, is now equally valid in the more economically-meaningful $(f,t,\LL)$ variables.

\subsection{The global isometry}

\begin{remark}[The $\cdspace$ global metric, \cite{CDSpaces2024}]
    Consider two positive distributions $\hat{\rho}_0$ and $\hat{\rho}_1$ over $\real$ with the same first moment. The $\cdspace$ metric associated to $\langle\,,\,\rangle_{\rho,\cdspace}$, \cref{def:w22Dtensor}, is \begin{equation}\label{eq:cdspaceglobalmetric}
         \begin{split}
            d_{\cdspace}^2(\hat{\rho}_0,\hat{\rho}_1) =\inf_{\text{curves } \rho_t} \bigg\{&\int_0^1\,dt\,\int_\real\,dx\,D[x,\rho_t] \left(\twoddx{\psi_t} \right)^2 \text{where } \ddt{\rho_t} = \twoddx{}\left(D[x,\rho_t]\twoddx{\psi_t}\right)\,\\&:\, \rho_0=\hat{\rho}_0,\, \rho_1=\hat{\rho}_1, \, \totdd{}{t}\left(\int_\real\,dx\,x\rho_t(x)\right)=0\bigg\}.
        \end{split}
    \end{equation}
\end{remark}

\begin{definition}[$\lorenzsub{\cdspace}$ global metric]\label{def:cdspacelorenzglobalmetric}
Let $||\cdot||_{\LLfrak,\cdspace,\LL}$ be the norm induced by \cref{def:tensorLorCD}. Then between two Lorenz curves $\hat{L}_0$ and $\hat{L}_1$ with $\hat{L}_0(1) = \hat{L}_1(1)$, 
    \begin{align}
        d^2_{\LLfrak,\cdspace}(\hat{L}_0,\hat{L}_1) &= \inf_{\text{curves } \LL_t} \left\{\int_0^1\,dt\,||\dot{\LL}_t||^2_{\LLfrak,\cdspace,\LL_t} \, :\, \LL_0 = \hat{L}_0, \, \LL_1 = \hat{L}_1\right\}\nonumber\\
        &=\inf_{\text{curves } \LL_t} \bigg\{\int_0^1\,dt\,\int_0^1\,df\,\frac{1}{\widetilde{d}[f,\LL_t]}\left(\ddf{\LL_t}\right)^2\left(\ddf{\dot{\LL}_t(f)}\right)^2 \,\nonumber \\
        &\qquad\qquad\qquad:\, \LL_0 = \hat{L}_0, \, \LL_1 = \hat{L}_1, \totdd{\LL_t(1)}{t}=0\bigg\}.\label{eq:LorCDGlobalMetricv2}
    \end{align}
\end{definition}

\begin{theorem}[$\cdspace$ and $\lorenzsub{\cdspace}$ isometry]\label{thm:CDLorIsom}
    For two positive distributions $\hat{\rho}_0$ and $\hat{\rho}_1$ over $\real$ with the same first moment, \[d_{\cdspace}^2(\hat{\rho}_0,\hat{\rho}_1) = d_{\LLfrak,\cdspace}^2(\Lbf[\hat{\rho}_0],\Lbf[\hat{\rho}_1]).\]
\end{theorem}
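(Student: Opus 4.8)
The plan is to mimic the proof of \cref{thm:w2MLorIsom} almost verbatim, substituting the second-order elliptic operator $\twoddx{}(D[x,\rho]\twoddx{\,\cdot\,})$ for the first-order operator $\ddx{}(M(\rho)\ddx{\,\cdot\,})$ and tracking the extra moment constraint. Concretely, I would start with an arbitrary smooth curve $\LL_t:[0,1]\to\lorenz{\real}$ with $\LL_0=\hat L_0$, $\LL_1=\hat L_1$, and $\totdd{\LL_t(1)}{t}=0$, set $\rho_t=\Pbf[\LL_t]$, and introduce $\psi_t$ vanishing at $\pm\infty$ that solves $\ddt{\rho_t}=\twoddx{}(D[x,\rho_t]\twoddx{\psi_t})$. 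The constraint $\totdd{\LL_t(1)}{t}=0$ is exactly the statement that the first moment $\int x\rho_t\,dx=\LL_t(1)$ is conserved, which is precisely the admissibility condition ($\int h=0=\int xh$) needed for $\psi_t$ to exist in the $\cdspace$ geometry; I would note this correspondence explicitly so that the constraint sets on the two sides match up. This gives a bijection between admissible curves $\LL_t$ on the Lorenz side and admissible curves $\rho_t$ on the $\cdspace$ side, so it suffices to show the two action integrands agree pointwise in $t$.

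The core computation is to re-express $\ddf{\dot\LL_t(f)}$ in terms of $\psi_t$. As in the earlier proof I would exchange $t$ and $f$ derivatives to write $\ddf{\dot\LL_t(f)}=\ddt{}\ddf{\LL_t}=\ddt{G[\rho_t](f)}$ via \cref{eq:fDerivL}. But here I expect to need \emph{two} integrations against the spatial variable rather than one: since $\ddt{\rho_t}=\twoddx{}(D\,\twoddx{\psi_t})$, integrating once from $-\infty$ to $x$ gives $\ddt{C[\rho_t](x)}=\ddx{}(D[x,\rho_t]\twoddx{\psi_t})$ (the boundary term at $-\infty$ vanishing), and then the time derivative of the inverse cdf formula from \cref{sec:reviewOfResultsLorDyn} converts $\ddt{C[\rho_t]}$ into $\ddt{G[\rho_t](f)}$ divided by $\rho_t(G[\rho_t](f))$ with the appropriate sign. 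Carrying this through, and using $\rho_t(G[\rho_t](f))=1/\twoddf{\LL_t}$ from \cref{sec:reviewOfResultsLorDyn}, I would arrive at an identity of the schematic form
\begin{equation*}
    \ddf{\dot\LL_t(f)} = -\,\twoddf{\LL_t(f)}\,\Big[\ddx{}\big(D[x,\rho_t]\twoddx{\psi_t}\big)\Big]_{x=G[\rho_t](f)}.
\end{equation*}
I would double-check the exact combination of factors of $\twoddf{\LL_t}$ here against the Onsager operator in \cref{tab:onsagerOpComparison}; this bookkeeping is the step most prone to sign or factor errors.

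Substituting this into the $\lorenzsub{\cdspace}$ action \cref{eq:LorCDGlobalMetricv2}, the integrand becomes $\frac{1}{\widetilde d[f,\LL_t]}(\twoddf{\LL_t})^{-2}\cdot(\twoddf{\LL_t})^2\big[\partial_x(D\partial_{xx}\psi_t)\big]^2$ evaluated at $x=G[\rho_t](f)$. Wait --- I should instead differentiate once more: actually the cleanest route is to keep the representation at the level of $\twoddx{\psi_t}$ by integrating twice, yielding $\ddf{\dot\LL_t(f)}$ proportional to $\big[D[x,\rho_t]\twoddx{\psi_t}\big]$-type terms; I would choose whichever of the one- or two-integration routes makes the $(\twoddf{\LL_t})^2$ weight in \cref{eq:LorCDGlobalMetricv2} cancel cleanly, exactly as the $m$-weight canceled in the $W_{2,M}$ proof. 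After the cancellation, the $f$-integral converts to an $x$-integral via the integral transformation rules of \cref{sec:reviewOfResultsLorDyn} (with $df = \rho_t(x)\,dx$), and using $\widetilde d[f,\LL_t]=d[x,\rho_t]$ and $D=d\rho$ one lands on $\int_0^1 dt\int_\real dx\,D[x,\rho_t](\twoddx{\psi_t})^2$, which is the integrand in the $\cdspace$ Benamou--Brenier-type formula \cref{eq:cdspaceglobalmetric}. Finally, since every admissible $\rho_t$ arises from an admissible $\LL_t$ and vice versa, infimizing over curves on either side yields the claimed equality $d_{\cdspace}^2(\hat\rho_0,\hat\rho_1)=d_{\LLfrak,\cdspace}^2(\Lbf[\hat\rho_0],\Lbf[\hat\rho_1])$. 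The main obstacle I anticipate is not conceptual but the careful matching of powers of $\twoddf{\LL_t}$ and the justification that all boundary terms (at $x=\pm\infty$ and at $f=0,1$) vanish under the standing smoothness and decay assumptions; the moment-constraint/first-moment bookkeeping is the one genuinely new ingredient beyond the $W_{2,M}$ case.
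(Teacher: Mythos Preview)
Your outline follows the paper's proof: the correspondence of admissible curves via the first-moment constraint is correct, and so is your intermediate identity
\[
\ddf{\dot\LL_t(f)} = -\,\twoddf{\LL_t(f)}\left[\ddx{}\Big(D[x,\rho_t]\,\twoddx{\psi_t}\Big)\right]_{x=G[\rho_t](f)}.
\]
The point where your proposal turns exploratory (``I would choose whichever route\ldots'') is exactly the one nontrivial step, and you have not actually carried it out. The paper does \emph{not} substitute $\partial_f\dot\LL_t$ into the action; it integrates the identity above once more. Multiplying both sides by $\rho_t$ and integrating in $x$ from $-\infty$, the rule $\rho\,\partial_f=\partial_x$ from \cref{sec:reviewOfResultsLorDyn} together with the tangent-space condition $\dot\LL_t(0)=0$ collapses the left side to $\dot\LL_t\big(C[\rho_t](x)\big)$, producing the clean pointwise relation
\[
\twoddx{\psi_t}=-\,\frac{\dot\LL_t\big|_{f=C[\rho_t](x)}}{D[x,\rho_t]}.
\]

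This is the identity your middle paragraph is groping for. Note also that the $\lorenzsub{\cdspace}$ action, read directly from \cref{def:tensorLorCD}, is $\int_0^1\tfrac{1}{\widetilde d}\big(\twoddf{\LL_t}\big)^{2}(\dot\LL_t)^{2}\,df$: it involves $\dot\LL_t$ itself, not $\partial_f\dot\LL_t$ (the second displayed line of \cref{eq:LorCDGlobalMetricv2} contains a typo, which seems to have misdirected your bookkeeping). With $\dot\LL_t=-D\,\twoddx{\psi_t}$ in hand and $\widetilde D=\widetilde d\big/\twoddf{\LL_t}$, the substitution plus $df=\rho_t\,dx$ gives $\int_\real D[x,\rho_t]\big(\twoddx{\psi_t}\big)^{2}\,dx$ immediately, with no residual powers of $\twoddf{\LL_t}$ to balance.
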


\begin{proof}
Fix a curve $\rho_t:[0,1]\rightarrow \pplusac{\real}$ such that \begin{equation*}
    \totdd{}{t}\int_\real\,dx\,x\rho_t(x) = 0
\end{equation*} for $t\in[0,1].$ Let $\psi_t$ solve \begin{equation}
    \ddt{\rho_t} = \twoddx{}\left[D[x,\rho_t]\twoddx{\psi_t}\right]\label{eq:cdIsomProofPsit}
\end{equation} and $\LL_t = \Lbf[\rho_t].$

Recall from the proof of \cref{thm:w2MLorIsom} that \begin{equation}
    \ddf{}\dot{\LL}_t = -\frac{1}{\left(\rho_t\circ G[\rho_t]\right)(f)}\int_{-\infty}^{G[\rho_t](f)}\,du\,\ddt{\rho_t(u)}.\nonumber
\end{equation}

In which case, using \cref{eq:cdIsomProofPsit}, \begin{equation}\label{eq:expressftDerivL}
    \ddf{}\dot{\LL}_t = -\frac{1}{\left(\rho_t\circ G[\rho_t]\right)(f)}\ddx{}\left[D[x,\rho_t]\twoddx{\psi_t}\right]_{x=G[\rho_t](f)}.
\end{equation}

Observe from \cref{eq:expressftDerivL} that \begin{equation}
    -\frac{1}{D[x,\rho_t]}\int_{-\infty}^x\,dy\, \rho_t(y) \left[\ddf{}\dot{\LL}_t(f)\right]_{f=C[\rho_t](y)}=\twoddx{\psi_t} \label{eq:cdIsomProof1},
\end{equation} which follows directly from the fundamental theorem of calculus. \Cref{eq:cdIsomProof1} can be simplified by making use of how the spatial differentials transform, \begin{equation}
    \rho\ddf{} = \ddx{}\nonumber,
\end{equation} to obtain the (surprisingly) simple expression 
\begin{equation}
    \twoddx{\psi_t} = -\frac{\dot{\LL}_t(f)|_{f=C[\rho_t](x)}}{D[x,\rho_t]}.\label{eq:cdIsomProof2}
\end{equation}

The expression for $\twoddx{\psi_t}$ given in \cref{eq:cdIsomProof2} may be inserted into the spatial integral in the $\cdspace$ global metric, \cref{eq:cdspaceglobalmetric}, so that \begin{align}
    \int_\real\,dx\,D[x,\rho_t] \left(\twoddx{\psi_t} \right)^2&= \int_\real\,dx\,D[x,\rho_t] \left(\frac{\dot{\LL}_t(f)|_{f=C[\rho_t](x)}}{D[x,\rho_t]} \right)^2 \nonumber\\
    &=\int_\real\,dx\,\frac{1}{D[x,\rho_t]} \left(\dot{\LL}_t(f)|_{f=C[\rho_t](x)} \right)^2\nonumber\\
    &=\int_0^1\,df\,\frac{1}{\widetilde{d}[f,\LL_t]\left[\left(\rho_t\circ G[\rho_t]\right)(f)\right]^2}\left(\dot{\LL}_t(f)\right)^2\nonumber\\
    &= \int_0^1\,df\,\frac{1}{\widetilde{d}[f,\LL_t]}\left(\twoddf{\LL_t(f)}\dot{\LL}_t(f)\right)^2,\label{eq:cdIsomProof3}
\end{align} making use of the integral transformation rule and that $\frac{1}{\left(\rho_t\circ G[\rho_t]\right)(f)} = \twoddf{\LL_t(f)}.$

We conclude by integrating \cref{eq:cdIsomProof3} in $t$ over $[0,1]$ and minimizing over all admissible curves $\rho_t$. Noting the correspondence between such curves and curves $\LL_t$, in the space of Lorenz curves with the same right boundary value yields the equality between the metrics.
\end{proof}

\begin{remark}[The lens of econophysics]
    \Cref{def:cdspacelorenzglobalmetric} presents an entirely novel metric on the space of Lorenz curves, which may be viewed in the following light: The 2-Wasserstein metric measures the distance between two probability measures based upon a kinetic premise about the work involved in moving mass over a distance. Here, there is a kinetic premise about the manner in which idealized economic agents move throughout the range of possible wealths, and this premise of economic mobility is proven, via isometry, to be reflected in the metric over Lorenz curves.\footnote{In particular, the kinetic premise is encoded in the diffusion coefficient, $D[x,\rho],$ which gives the expected infinitesimal variance of an agent with wealth $x$ in the diffusion approximation of a mean-field model.} 
    
    What then is a sensible way to measure the difference between the Lorenz curves of two distributions of wealth? If there is an established assumption about the underlying economic dynamics generating the distributions, then this kinetic notion ought to be taken into account in the sense of \cref{def:cdspacelorenzglobalmetric}, which is connected to \cref{eq:cdspaceglobalmetric} by \cref{thm:CDLorIsom}.
\end{remark}

\appendix

\section{Key results from \texorpdfstring{\cite{LorDyn2024}}{}}\label{sec:reviewOfResultsLorDyn}
We quickly review key results from \cite{LorDyn2024} that are used throughout this paper. The review of those previous results will be done using the notation of the present paper.

Let $\rho_t(x)$ be a smooth time-parameterized curve in $\pplusac{\real}$. The cumulative distribution function (cdf) of the density $\rho_t(x)$ is $C[\rho_t](x) = \int_{-\infty}^x\,dy\,\rho_t(y)$ and its inverse is $G[\rho_t](f)$ for $f\in[0,1]$. For all $x\in\real,$ $\left(G[\rho_t]\circ C[\rho_t]\right)(x) = x.$

An integral over $x\in\real$ can be transformed into an integral over the cdf variable $f\in[0,1]$ by way of the variable transformation $f:=C[\rho_t](x)$ such that \begin{equation*}
    df = \rho_t(x) dx \quad \text{and} \quad dx = \frac{1}{\left(\rho_t\circ G[\rho_t]\right)(f)}df.
\end{equation*}

In this manner integrals transform as \begin{equation}
    \int_\real\,dx\,\rho_t(x)Q(x,t) = \int_0^1\,df\,Q(G[\rho_t](f),t)
\end{equation} and 

\begin{equation}
    \int_0^1\,df\, R(f,t) = \int_\real\,dx\,\rho_t(x)R(C[\rho_t](x),t).
\end{equation}

Moreover differentials transform as \begin{equation*}
    \ddx{} = \left(\rho_t\circ G[\rho_t]\right)(f)\ddf{} \quad \text{and} \quad \ddf{} = \frac{1}{\rho_t(x)}\ddx{}.
\end{equation*}

These relations will be repeatedly used through the derivations in the main body of the paper.

The Lorenz curve of a positive probability density $\rho \in \pplusac{\real}$ is defined as $\LL(f) = (L\circ G[\rho])(f)$ where $L(x) = \int_{-\infty}^x\,dy\,y\rho(y).$ We will denote this by $\Lbf[\rho] = \LL$, and the density associated to a Lorenz curve is $\Pbf[\LL].$

Let $\LL$ be a Lorenz curve and $\rho,$ the associated density. The spatial derivative of a Lorenz curve in the cdf variable \begin{equation}\label{eq:fDerivL}
    \ddf{\LL(f)} = G[\rho](f).
\end{equation} This is easily seen by transforming the differential as \begin{align*}
    \ddf{\LL(f)} &= \left[\frac{1}{\rho(x)}\ddx{}L(x)\right]_{x=G[\rho](f)}\\
    &= x|_{G[\rho](f)}.
\end{align*}

Informally this indicates that the spatial variable $x$ may be replaced by $\totdd{\LL}{f}$, with the appropriately transformed arguments.

The second spatial derivative of a Lorenz curve is the reciprocal of the density, \begin{equation}\label{eq:ffDerivL}
    \tottwodd{\LL(f)}{f} \equiv \totdd{G[\rho](f)}{f}=\frac{1}{\left(\rho\circ G[\rho]\right)(f)}.
\end{equation} This follows from the inverse function theorem, the positivity of $\rho$, and the previous result on $\totdd{\LL}{f}.$ This implies the formal replacement of $\rho$ with $\left(\tottwodd{\LL}{f}\right)^{-1}$.

The final result, which is used in the proofs of the isometry theorems, is the computation of the time-derivative of the inverse cdf $G[\rho_t](f)$. The expression of the time-derivative, \begin{equation}\label{eq:tDerivG}
    \ddt{G[\rho_t](f)} = -\frac{1}{\left(\rho_t\circ G[\rho_t]\right)(f)}\int_{-\infty}^{G[\rho_t](f)}\,dy\, \ddt{\rho_t(y)},
\end{equation} follows from differentiating the identity, \begin{equation*}
    \left(C[\rho_t]\circ G[\rho_t]\right)(f) = f,
\end{equation*} in time and re-arranging to obtain \begin{equation*}
    \ddt{G}=-\ddt{C}\bigg/\dd{C}{G}.
\end{equation*} A more heuristic argument for the same result is found in the first proof of \cref{apdx:techLemmasProofs}.

\section{Proof of technical lemmas}\label{apdx:techLemmasProofs}
\begin{proof}
    Let $\rho_t$ be a time-parameterized, smooth curve in $\pplusac{\real}$ and define $\ddt{}\big|_{t=0}\rho_t(x) = h(x)$. We seek to find \begin{equation*}
        \ddt{}\bigg|_{t=0}G[\rho_t](f),
    \end{equation*} where $G[\rho_t]$ is the inverse cdf of $\rho_t.$
    Recall that $C[\rho_t]$ is the cdf of $\rho_t$ and note that \begin{equation}\label{eq:inverseCdfEquality}
        C[\rho_t]\left(G[\rho_t](f)\right) = C[\rho_0]\left(G[\rho_0](f)\right).
    \end{equation}
    Write $G[\rho_t](f) = G[\rho_0](f) + \left(G[\rho_t](f)-G[\rho_0](f)\right)$ then the left hand side of \cref{eq:inverseCdfEquality} is
    \begin{equation}\label{eq:firstOrderGExpanV1}
        C[\rho_0]\bigg( G[\rho_0](f) + \left(G[\rho_t](f)-G[\rho_0](f)\right)\bigg) + t\int_{-\infty}^{ G[\rho_0](f) + \left(G[\rho_t](f)-G[\rho_0](f)\right)}\,dy\,h(y),
    \end{equation} neglecting second-order and higher terms. The first term of the previous expression can also be expanded to first-order so that \cref{eq:firstOrderGExpanV1} becomes \begin{equation}
        C[\rho_0](G[\rho_0](f))+\left(G[\rho_t](f)-G[\rho_0](f)\right)\rho_0\left(G[\rho_0](f)\right)+t\int_{-\infty}^{ G[\rho_0](f)}\,dy\,h(y).
    \end{equation} 

    Returning to \cref{eq:inverseCdfEquality} and re-arranging gives \begin{equation*}
        \frac{G[\rho_t](f)-G[\rho_0](f)}{t} = -\frac{\int_{-\infty}^{G[\rho_0](f)}\,dy\,h(y)}{\rho_0\left(G[\rho_0](f)\right)} + \text{second-order and higher terms}.
    \end{equation*}
\end{proof}

\begin{proof}
Fix $f\in[0,1]$ and consider a smooth curve of positive probability densities over $\real.$

This result follows from applying the Leibniz integral rule and inserting the previous result.
\begin{align*}
    \ddt{}\bigg|_{t=0}\Lbf[\rho_t](f) &= \ddt{}\bigg|_{t=0}\int_{-\infty}^{G[\rho_t](f)}\,dy\,y\rho_t(y)\\
    &= \int_{-\infty}^{G[\rho_0](f)}\,dy\,yh(y) + G[\rho_0](f)\rho_0\left(G[\rho_0](f)\right) \ddt{}\bigg|_{t=0}G[\rho_t](f)\\
    &=\int_{-\infty}^{G[\rho_0](f)}\,dy\,yh(y) - G[\rho_0](f)\int_{-\infty}^{G[\rho_0](f)}\,dy\,h(y).
\end{align*}
\end{proof}

\bibliographystyle{amsplain}
\bibliography{biblio}

\end{document}